\documentclass[a4paper,10pt]{article}

\usepackage[square,numbers]{natbib}
\usepackage{amsfonts,amsmath,amssymb,amsthm}
\usepackage{hyperref}
\usepackage{graphicx}
\usepackage{mathpazo}
\usepackage[font={small}]{caption}
\usepackage{tikz}
\usepackage{tikz-cd}
\usepackage{float}
\usepackage{booktabs}
\usepackage{xargs}
\usepackage{MnSymbol}

\usetikzlibrary{arrows}
\usetikzlibrary{decorations}
\usetikzlibrary{shapes}
\usetikzlibrary{positioning}
\usetikzlibrary{decorations.pathmorphing,shapes}
\usetikzlibrary{backgrounds}

\allowdisplaybreaks
\setlength{\evensidemargin}{0cm}
\setlength{\oddsidemargin}{0cm}
\setlength{\marginparwidth}{0cm}
\setlength{\marginparsep}{0cm}
\setlength{\marginparpush}{0in}
\setlength{\topmargin}{-0.2in}
\setlength{\headheight}{0pt}
\setlength{\headsep}{0pt}
\setlength{\footskip}{.3in}
\setlength{\textheight}{9.7in}
\setlength{\textwidth}{6.5in}
\setlength{\parskip}{4pt}

\theoremstyle{definition}
\newtheorem{theorem}{Theorem}[section]

\newtheorem{prop}[theorem]{Proposition}

\newtheorem{cor}[theorem]{Corollary}

\newtheorem{rem}[theorem]{Remark}
\newtheorem{ex}[theorem]{Example}

\newcommand*{\scalingFactorSmall}{0.4}

\DeclareMathOperator{\rk}{rk}

\DeclareMathOperator{\id}{id}

\title{Maximum antichains in posets of quiver representations}
\author{Florian Gellert, Philipp Lampe}

\begin{document}


  \makeatletter
  \pgfdeclareshape{rectangle with diagonal fill}
  {
      \inheritsavedanchors[from=rectangle]
      \inheritanchorborder[from=rectangle]
      \inheritanchor[from=rectangle]{north}
      \inheritanchor[from=rectangle]{north west}
      \inheritanchor[from=rectangle]{north east}
      \inheritanchor[from=rectangle]{center}
      \inheritanchor[from=rectangle]{west}
      \inheritanchor[from=rectangle]{east}
      \inheritanchor[from=rectangle]{mid}
      \inheritanchor[from=rectangle]{mid west}
      \inheritanchor[from=rectangle]{mid east}
      \inheritanchor[from=rectangle]{base}
      \inheritanchor[from=rectangle]{base west}
      \inheritanchor[from=rectangle]{base east}
      \inheritanchor[from=rectangle]{south}
      \inheritanchor[from=rectangle]{south west}
      \inheritanchor[from=rectangle]{south east}

      \inheritbackgroundpath[from=rectangle]
      \inheritbeforebackgroundpath[from=rectangle]
      \inheritbehindforegroundpath[from=rectangle]
      \inheritforegroundpath[from=rectangle]
      \inheritbeforeforegroundpath[from=rectangle]

      \behindbackgroundpath{%
          \pgfextractx{\pgf@xa}{\southwest}%
          \pgfextracty{\pgf@ya}{\southwest}%
          \pgfextractx{\pgf@xb}{\northeast}%
          \pgfextracty{\pgf@yb}{\northeast}%
          \ifpgf@diagonal@lefttoright
              \def\pgf@diagonal@point@a{\pgfpoint{\pgf@xa}{\pgf@yb}}%
              \def\pgf@diagonal@point@b{\pgfpoint{\pgf@xb}{\pgf@ya}}%
          \else
              \def\pgf@diagonal@point@a{\southwest}%
              \def\pgf@diagonal@point@b{\northeast}%
          \fi
          \pgfpathmoveto{\pgf@diagonal@point@a}%
          \pgfpathlineto{\northeast}%
          \pgfpathlineto{\pgfpoint{\pgf@xb}{\pgf@ya}}%
          \pgfpathclose
          \ifpgf@diagonal@lefttoright
              \color{\pgf@diagonal@top@color}%
          \else
              \color{\pgf@diagonal@bottom@color}%
          \fi
          \pgfusepath{fill}%
          \pgfpathmoveto{\pgfpoint{\pgf@xa}{\pgf@yb}}%
          \pgfpathlineto{\southwest}%
          \pgfpathlineto{\pgf@diagonal@point@b}%
          \pgfpathclose
          \ifpgf@diagonal@lefttoright
              \color{\pgf@diagonal@bottom@color}%
          \else
              \color{\pgf@diagonal@top@color}%
          \fi
          \pgfusepath{fill}%
      }
  }

  \newif\ifpgf@diagonal@lefttoright
  \def\pgf@diagonal@top@color{white}
  \def\pgf@diagonal@bottom@color{gray!30}

  \def\pgfsetdiagonaltopcolor#1{\def\pgf@diagonal@top@color{#1}}%
  \def\pgfsetdiagonalbottomcolor#1{\def\pgf@diagonal@bottom@color{#1}}%
  \def\pgfsetdiagonallefttoright{\pgf@diagonal@lefttorighttrue}%
  \def\pgfsetdiagonalrighttoleft{\pgf@diagonal@lefttorightfalse}%

  \tikzoption{diagonal top color}{\pgfsetdiagonaltopcolor{#1}}
  \tikzoption{diagonal bottom color}{\pgfsetdiagonalbottomcolor{#1}}
  \tikzoption{diagonal from left to right}[]{\pgfsetdiagonallefttoright}
  \tikzoption{diagonal from right to left}[]{\pgfsetdiagonalrighttoleft}

  \makeatother

  \maketitle

  \abstract{
  We study maximum antichains in two posets related to quiver representations. Firstly, we consider the set of isomorphism classes of indecomposable representations ordered by inclusion. For various orientations of the Dynkin diagram of type A we construct a maximum antichain in the poset.
  Secondly, we consider the set of subrepresentations of a given quiver representation, again ordered by inclusion. It is a finite set if we restrict to linear representations over finite fields or to representations with values in the category of pointed sets. For particular situations we prove that this poset is Sperner.
  }

  \section{Introduction and notation}

  \subsection{Maximum antichains in posets}

  Let $(P,\leq)$ be a poset. Two elements $a,b\in P$ are called \textit{incomparable} if neither $a\leq b$ nor $b\leq a$ holds. The elements are called \textit{comparable} otherwise. A subset $\mathcal{F}\subseteq P$ of pairwise incomparable elements is called an \textit{antichain}. An antichain $\mathcal{F}\subseteq P$ is called \textit{maximal} if there does not exist an element $a\in P$ such that $\mathcal{F}\cup\{a\}$ is an antichain. It is called \textit{maximum} if there does not exist an antichain $\mathcal{F}'\subseteq P$ such that $\vert \mathcal{F}'\vert>\vert \mathcal{F}\vert$. Note that every maximum antichain is a maximal antichain, but the converese does not hold in general. The size of maximum antichain is sometimes called the \textit{width} of the poset. Furthermore, a subset $C\subseteq P$ of pairwise comparable elements is called a \textit{chain}. Note that the elements of a chain can be reordered to form a sequence $(a_1\leq a_2\leq\ldots\leq a_k)$ and we will often use this notation to describe a chain. \textit{Maximal} and \textit{maximum} chains are defined in a similar way to maximal and maximum antichains.

  Let $n\geq 1$ be an integer. We denote by $\mathcal{P}_n$ set of all subsets of the finite set $\{1,2,\ldots,n\}$. Note that $\mathcal{P}_n$ is partially ordered by inclusion. Sperner \cite{Sp} constructs a maximum antichain in $(\mathcal{P}_n,\subseteq)$:

  \begin{theorem}[Sperner]
  The set $\{A\in\mathcal{P}_n\colon \vert A\vert=\lfloor n/2\rfloor\}$ is a maximum antichain in the poset $(\mathcal{P}_n,\subseteq)$ so that width of the poset is given by the binomial coefficient $\dbinom{n}{\lfloor n/2\rfloor}$.
  \end{theorem}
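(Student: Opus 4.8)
The plan is to establish the two halves of the statement separately: first that the family $\mathcal{A}=\{A\in\mathcal{P}_n\colon |A|=\lfloor n/2\rfloor\}$ is an antichain, and then that no antichain in $(\mathcal{P}_n,\subseteq)$ can have more than $|\mathcal{A}|=\binom{n}{\lfloor n/2\rfloor}$ elements. The first half is immediate: if $A,B\in\mathcal{A}$ with $A\subseteq B$, then $|A|=|B|$ forces $A=B$, so distinct members of $\mathcal{A}$ are pairwise incomparable. In particular the width of the poset is at least $\binom{n}{\lfloor n/2\rfloor}$.

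For the upper bound I would run Lubell's chain-counting argument, which in fact yields the stronger LYM inequality. Call a chain $\emptyset=C_0\subsetneq C_1\subsetneq\cdots\subsetneq C_n=\{1,2,\ldots,n\}$ with $|C_i|=i$ a \emph{maximal chain}; such a chain is the same datum as a permutation of $\{1,\ldots,n\}$ (the order in which elements are added), so there are exactly $n!$ maximal chains, and a fixed $k$-subset $A$ lies on exactly $k!\,(n-k)!$ of them (order the elements of $A$, then order the elements outside $A$). Now let $\mathcal{F}\subseteq\mathcal{P}_n$ be any antichain. Since a maximal chain is a chain, it contains at most one member of $\mathcal{F}$. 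Counting the incidences between maximal chains and members of $\mathcal{F}$ in two ways gives
\[
\sum_{A\in\mathcal{F}} |A|!\,(n-|A|)! \;\leq\; n!,
\]
which after dividing by $n!$ is the LYM inequality $\sum_{A\in\mathcal{F}}\binom{n}{|A|}^{-1}\leq 1$.

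To finish, I would invoke the unimodality of a single row of Pascal's triangle, namely $\binom{n}{k}\leq\binom{n}{\lfloor n/2\rfloor}$ for every $0\leq k\leq n$, a routine induction on $k$ using $\binom{n}{k}/\binom{n}{k-1}=(n-k+1)/k$. Then each summand above satisfies $\binom{n}{|A|}^{-1}\geq\binom{n}{\lfloor n/2\rfloor}^{-1}$, so
\[
\frac{|\mathcal{F}|}{\binom{n}{\lfloor n/2\rfloor}} \;\leq\; \sum_{A\in\mathcal{F}}\binom{n}{|A|}^{-1} \;\leq\; 1 ,
\]
whence $|\mathcal{F}|\leq\binom{n}{\lfloor n/2\rfloor}$. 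Together with the lower bound coming from $\mathcal{A}$ itself, this shows $\mathcal{A}$ is a maximum antichain and that the width equals $\binom{n}{\lfloor n/2\rfloor}$.

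There is no genuine obstacle here; the only step needing a little care is the double count, ensuring that "at most one member of $\mathcal{F}$ per maximal chain" is combined correctly with the count $k!\,(n-k)!$ of maximal chains through a fixed $k$-set. As an alternative I would mention, but not carry out, the symmetric chain decomposition: one can partition $\mathcal{P}_n$ into exactly $\binom{n}{\lfloor n/2\rfloor}$ chains, each symmetric about the middle layer and hence passing through it, and since any antichain meets each block of a chain partition at most once, such a partition simultaneously bounds the width from above and certifies that the bound is attained by the middle layer $\mathcal{A}$.
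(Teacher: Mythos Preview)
Your proof is correct: Lubell's double count of maximal chains yields the LYM inequality, and combining it with the unimodality of the binomial coefficients gives the bound $|\mathcal{F}|\leq\binom{n}{\lfloor n/2\rfloor}$. The alternative via symmetric chain decomposition that you sketch at the end would also work.

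However, this is not the route the paper takes. The paper does not prove Sperner's theorem directly but rather sketches Stanley's linear-algebraic proof: one defines up and down operators $U,D$ on $\mathbb{Q}\mathcal{P}_n$ by $U(A)=\sum_{b\notin A}A\cup\{b\}$ and $D(A)=\sum_{a\in A}A\setminus\{a\}$, verifies the commutation relation $D_{i+1}U_i-U_{i-1}D_i=(n-2i)\operatorname{id}$, and deduces that $D_{i+1}U_i$ is positive definite for $i<\lfloor n/2\rfloor$, hence $U_i$ is injective (and dually $D_i$ is injective for $i>\lfloor n/2\rfloor$). Stanley's criterion (the paper's Theorem~\ref{thm:Stanley}) then gives the Sperner property. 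Your argument is more elementary and self-contained, needing no linear algebra; the paper's approach, on the other hand, is the one that generalises to the subrepresentation posets treated later in Section~3, where the same commutator strategy is reused for the quiver $A_2$. So while your proof is perfectly valid for the classical statement, it does not set up the machinery that the rest of the paper relies on.
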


  In a later work, Stanley \cite[Theorem 2.2]{St} gives an elegant proof of Sperner's theorem using linear algebra and a grading of the poset. We give a sketch of the proof after recalling some basic notions about posets. We say that the element $a\in P$ \textit{covers} the element $b\in P$ if $a>b$ and there does not exist an element $c\in P$ with $a>c>b$. Moreover, we say that $\textit{a}\in P$ is \textit{minimal} if there does not exist an element $b\in P$ with $b<a$. Dually, we say that $\textit{a}\in P$ is \textit{maximal} if there does not exist an element $b\in P$ with $b>a$. The poset $(P,\leq)$ is called \textit{graded} if there exists a map $\operatorname{deg}\colon P\to\mathbb{N}$ such that $\operatorname{deg}(a)=0$ for all minimal elements $a\in P$ and $\operatorname{deg}(a)=\operatorname{deg}(b)+1$ whenever $a$ covers $b$. For example, the poset $\mathcal{P}_n$ is graded by the cardinality viewed as a map $\vert\cdot\vert\colon \mathcal{P}_n\to\mathbb{N}$. A poset $(P,\leq)$ is called \textit{bounded} if it contains a \textit{miminum}, i.\,e. an element $a\in P$ such that $a\leq b$ for every $b\in P$, and a \textit{maximum}, i.\,e. an element $a\in P$ such that $b\leq a$ for every $b\in P$. Note that a finite, bounded poset is graded if and only if all maximal chains have the same cardinality. The rank of a graded poset $(P,\leq)$ is $\rk(P)=\operatorname{max}(\operatorname{deg}(a)\colon a\in P)$.

  Suppose that the poset $(P,\leq)$ is indeed finite, bounded and graded of rank $n$. Then we denote by $P_i\subseteq P$ the subset of elements of degree $i$. It is easy to see that every $P_i$ with $i\geq 0$ is an antichain. We say that $P$ is \textit{Sperner} if there exists a natural number $i$ such that $\vert\mathcal{F}\vert\leq \vert P_i\vert$ for every antichain $\mathcal{F}\subseteq P$. In other words, $P_i$ is a maximum antichain but there may exist other maximum antichains in $P$. In the case of the power set $\mathcal{P}_n$ we simply write $\mathcal{P}_{n,i}$ instead of $(\mathcal{P}_n)_i$. Sperner's theorem implies that the power set $\mathcal{P}_n$ is Sperner. For every $i$ we consider the $\mathbb{Q}$-vector space $\mathbb{Q}P_i$ with basis $P_i$.

  \begin{theorem}[Stanley]
  \label{thm:Stanley}
  Let $k$ be a natural number. Suppose that there are injective linear maps \linebreak $U_i\colon \mathbb{Q}P_i\to\mathbb{Q}P_{i+1}$ for $0\leq i\leq k-1$ such that $U_i(a)\in\sum_{b\geq a}\mathbb{Q}b$ for every $a\in P_i$; suppose further that there are injective linear maps $D_i\colon\mathbb{Q}P_{i}\to\mathbb{Q}P_{i-1}$ for $k+1\leq i\leq n$ such that for $D_i(a)\in\sum_{b<a}\mathbb{Q}b$ for every $a\in P_i$, \linebreak see Figure~\ref{fig:stanley} for an illustration. Then the poset $(P,\leq)$ is Sperner and $P_k$ is a maximum antichain.
  \begin{figure}
	  \begin{center}
		  \begin{tikzpicture}
			  \node (1) at (0,0) {$\mathbb{Q}P_0$};
			  \node (2) at (2,0) {$\mathbb{Q}P_1$};
			  \node (3) at (4,0) {$\mathbb{Q}P_2$};
			  \node (4) at (6,0) {$\cdots$};
			  \node (5) at (8,0) {$\mathbb{Q}P_{k-1}$};
			  \node (6) at (10,0) {$\mathbb{Q}P_k$};
			  \node (7) at (0,-0.8) {$\mathbb{Q}P_k$};
			  \node (8) at (2,-0.8) {$\mathbb{Q}P_{k+1}$};
			  \node (9) at (4,-0.8) {$\mathbb{Q}P_{k+2}$};
			  \node (10) at (6,-0.8) {$\cdots$};
			  \node (11) at (8,-0.8) {$\mathbb{Q}P_{n-1}$};
			  \node (12) at (10,-0.8) {$\mathbb{Q}P_n$};

			  \draw[right hook-latex,above] (1) to node {\footnotesize{$U_0$}} (2);
			  \draw[right hook-latex,above] (2) to node {\footnotesize{$U_1$}} (3);
			  \draw[right hook-latex,above] (3) to node {\footnotesize{$U_2$}} (4);
			  \draw[right hook-latex,above] (4) to node {\footnotesize{$U_{k-2}$}} (5);
			  \draw[right hook-latex,above] (5) to node {\footnotesize{$U_{k-1}$}} (6);
			  \draw[left hook-latex,>= latex,above] (8) to node {\footnotesize{$D_{k+1}$}} (7);
			  \draw[left hook-latex,>= latex,above] (9) to node {\footnotesize{$D_{k+2}$}} (8);
			  \draw[left hook-latex,>= latex,above] (10) to node {\footnotesize{$D_{k+3}$}} (9);
			  \draw[left hook-latex,>= latex,above] (11) to node {\footnotesize{$D_{n-1}$}} (10);
			  \draw[left hook-latex,>= latex,above] (12) to node {\footnotesize{$D_{n}$}} (11);
		  \end{tikzpicture}
	  \end{center}
	  \caption{Injective maps in Stanley's theorem}\label{fig:stanley}
  \end{figure}
  \end{theorem}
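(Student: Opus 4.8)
The plan is to reduce the statement to constructing a partition of $P$ into exactly $|P_k|$ chains, each of which meets the level $P_k$ in precisely one element. Granting such a partition $P=\bigcup_{v\in P_k}C_v$, any antichain $\mathcal{F}\subseteq P$ satisfies $|\mathcal{F}\cap C_v|\le 1$ for every $v$, hence $|\mathcal{F}|\le|P_k|$; since $P_k$ is itself an antichain, it is then a maximum one and $P$ is Sperner by definition. So the real work is to convert the linear maps $U_i,D_i$ into this combinatorial structure.

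The first step is a matching lemma: an injective linear map $U_i\colon\mathbb{Q}P_i\to\mathbb{Q}P_{i+1}$ with $U_i(a)\in\sum_{b\ge a}\mathbb{Q}b$ for all $a\in P_i$ gives rise to an injective map of sets $\mu_i\colon P_i\to P_{i+1}$ with $\mu_i(a)$ covering $a$ for all $a$. To see this, let $M=(c_{b,a})_{b\in P_{i+1},\,a\in P_i}$ be the matrix of $U_i$ in the bases $P_i$ and $P_{i+1}$. Because $P$ is graded and $\deg b=\deg a+1$, any relation $b\ge a$ with $b\in P_{i+1}$, $a\in P_i$ is already a covering relation, so the hypothesis on $U_i$ means $c_{b,a}=0$ unless $b$ covers $a$. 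Injectivity of $U_i$ gives $\rk M=|P_i|$, so some square submatrix of $M$ on a row set $S\subseteq P_{i+1}$ with $|S|=|P_i|$ has nonzero determinant; expanding this determinant over the bijections $\sigma\colon P_i\to S$, at least one term $\prod_{a\in P_i}c_{\sigma(a),a}$ is nonzero, and such a $\sigma$ is the required matching. The transpose of this argument, applied to $D_i$ for $k+1\le i\le n$, yields injective maps $\nu_i\colon P_i\to P_{i-1}$ with $a$ covering $\nu_i(a)$.

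The second step assembles the matchings. On the levels $0,\dots,k$, draw an arc $a\to\mu_i(a)$ for each $0\le i\le k-1$ and $a\in P_i$. Every vertex in levels $0,\dots,k-1$ then has out-degree $1$, every vertex has in-degree at most $1$ (injectivity of the $\mu_i$), and every arc goes from one level to the next one up, so there are no directed cycles. Hence this digraph is a disjoint union of directed paths, and the terminal vertex of each path, being of out-degree $0$, lies in $P_k$. Distinct paths have distinct terminal vertices and every element of $P_k$ is the terminal vertex of its own path, so there are exactly $|P_k|$ paths; each path, read along its arcs, is a chain (an arc $a\to\mu_i(a)$ witnesses $a<\mu_i(a)$, and chains are closed under transitivity), and together these chains partition levels $0,\dots,k$. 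Symmetrically, the arcs $a\to\nu_i(a)$ partition levels $k,\dots,n$ into exactly $|P_k|$ chains, each with its minimum in $P_k$. For $v\in P_k$, concatenate the lower chain with maximum $v$ and the upper chain with minimum $v$ to obtain a chain $C_v$; the family $(C_v)_{v\in P_k}$ is then the desired partition of $P$.

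The determinant expansion and the verification that the concatenations are genuine chains are routine. The step that needs care — and the real content of the proof — is the structural claim about the union of the matchings on each half: that it decomposes into directed paths, each reaching the level $P_k$ exactly once. This uses the graded (layered) structure crucially, together with injectivity of each $\mu_i$ (to exclude cycles) and the fact that $\mu_i$ is total on $P_i$ for $i<k$ (so that every path climbs all the way up to $P_k$); the same remarks govern the $D_i$-side, and the two halves glue because both decompositions single out the common level $P_k$. The degenerate cases $k=0$ and $k=n$, where one of the two families of operators is empty, are covered by the same argument with the corresponding half of the chain partition trivial.
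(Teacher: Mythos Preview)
The paper does not actually supply its own proof of this theorem; it states the result with attribution to Stanley \cite{St} and moves directly to applications. Your argument is correct and is essentially the standard proof: extract from each injective order-raising (resp.\ order-lowering) operator a combinatorial matching via the Leibniz expansion of a full-rank square minor, then glue the resulting matchings into a chain partition indexed by $P_k$. The two points worth flagging are both handled adequately in your write-up: (i) that $b\ge a$ with $\deg b=\deg a+1$ forces $b$ to cover $a$ in a graded poset, so the matrix entries really are supported on the Hasse diagram; and (ii) that on the lower half every vertex below level $k$ has out-degree exactly one and in-degree at most one, forcing the arc set to decompose into vertex-disjoint paths each terminating in $P_k$. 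Nothing is missing.
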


  As a first application, Stanley gives an elegant proof of Sperner's theorem. A crucial role in the proof is played by the linear maps $U,D\colon \mathbb{Q}\mathcal{P}_n\to\mathbb{Q}\mathcal{P}_n$ defined by the formulae $U(A)=\sum_{b\notin A}A\cup \{b\}$\linebreak and $D(A)=\sum_{a\in A} A\setminus \{a\}$ for all $A\in\mathcal{P}_n$. Moreover, we denote by $U_i\colon\mathbb{Q}\mathcal{P}_{n,i}\to\mathbb{Q}\mathcal{P}_{n,i+1}$ and \linebreak $D_i\colon\mathbb{Q}\mathcal{P}_{n,i}\to\mathbb{Q}\mathcal{P}_{n,i-1}$ the restrictions of $U$ and $D$ to the homogeneous component of degree $i$. An essential observation in Stanley's argument is the commutation relation $D_{i+1}U_i-U_{i-1}D_i=(n-2i)\operatorname{id}_{\mathbb{Q}P_{n,i}}$ for all $i$. The relation implies that $D_{i+1}U_i$ is positive definite for $0\leq i< \left\lfloor\tfrac n2\right\rfloor$. Hence $D_{i+1}U_i$ is invertible so that $U_i$ must be injective in this case. By a similar argument we can show that $D_i$ is injective for $\left\lfloor\tfrac n2\right\rfloor< i\leq n$. As a second application, Stanley proves that the poset of vector subspaces of a finite-dimensional vector space over a finite field, again ordered by inclusion, is Sperner.

  A \textit{chain decomposition} of a poset $(P,\leq)$ is a disjoint union $P=C_1\sqcup C_2\sqcup\ldots \sqcup C_k$ where every $C_i$ is a chain in $P$. If $\mathcal{F}\subseteq P$ is an antichain, then every chain $C_i$ contains at most one element of $\mathcal{F}$. Especially, we have $\vert\mathcal{F}\vert\leq k$. A chain decomposition is called a \textit{Dilworth decomposition} if there does not exist a chain decomposition with a smaller number of chains. The next theorem is due to Dilworth \cite[Theorem 1.1]{D}:

  \begin{theorem}[Dilworth] The cardinality of a maximum antichain in a finite poset $(P,\leq)$ is equal to the smallest number of chains in a chain decomposition of $(P,\leq)$.
  \end{theorem}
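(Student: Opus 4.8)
The inequality ``width of $P$ $\leq$ smallest number of chains in a chain decomposition'' is already recorded in the paragraph preceding the statement: an antichain meets each chain of a decomposition in at most one element. So the plan is to establish the reverse inequality, namely to construct a chain decomposition of $P$ into exactly $w$ chains, where $w$ denotes the width of $P$. I would argue by induction on $\vert P\vert$, the case $P=\emptyset$ being trivial. For the inductive step, fix a maximal chain $C$ in $P$ and let $c_{\min}$ and $c_{\max}$ be its least and greatest elements; since $C$ is maximal, $c_{\min}$ is a minimal element of $P$ and $c_{\max}$ is a maximal element of $P$. Put $Q=P\setminus C$. If the width of $Q$ is at most $w-1$, then by induction $Q$ admits a chain decomposition with at most $w-1$ chains, and adjoining the chain $C$ yields a chain decomposition of $P$ with at most $w$ chains; together with the easy inequality this forces exactly $w$ chains, and we are done.

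The substantial case is when $Q$ still has width $w$ (it cannot be larger, being a subset of $P$). Choose an antichain $A=\{a_1,\ldots,a_w\}\subseteq Q$ and set $D^-=\{x\in P\colon x\leq a_i \text{ for some } i\}$ and $D^+=\{x\in P\colon x\geq a_i \text{ for some } i\}$. I would then check: (i) $D^-\cup D^+=P$, since an element comparable to no $a_i$ would enlarge $A$ to an antichain of size $w+1$; (ii) $D^-\cap D^+=A$, since $a_j\leq x\leq a_i$ forces $a_j\leq a_i$, hence $i=j$ and $x=a_i$; (iii) $D^-\subsetneq P$ and $D^+\subsetneq P$, because $c_{\max}\notin D^-$ (otherwise $c_{\max}=a_i\in Q$, a contradiction) and symmetrically $c_{\min}\notin D^+$. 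One also notes that each $a_i$ is a maximal element of $D^-$ and a minimal element of $D^+$.

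Now apply the induction hypothesis to $D^-$ and to $D^+$: each has width exactly $w$ (it contains $A$ and lies in $P$) and is strictly smaller than $P$ by (iii), so each is partitioned into exactly $w$ chains. Since each of these $w$ chains contains at most one element of $A$ and there are $w$ of them, each chain contains exactly one $a_i$; as $a_i$ is maximal in $D^-$ it is the top of its chain $C_i^-$, and symmetrically $a_i$ is the bottom of a chain $C_i^+$ in the decomposition of $D^+$. Then $C_i:=C_i^-\cup C_i^+$ is a chain, the two pieces sharing precisely the element $a_i$, and by (i) and (ii) the chains $C_1,\ldots,C_w$ partition $P$, which is the required decomposition.

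I expect the main obstacle to be the bookkeeping in this last step: one must secure that the inductive hypothesis applies to strictly smaller sets (this is exactly why the properness in (iii) is needed) and that the elements of $A$ occur as the extreme elements of the chains, so that the gluing is legitimate and produces exactly $w$ chains. An alternative I would mention is Fulkerson's derivation from K\"onig's theorem: form the bipartite graph on two disjoint copies of $P$ with an edge $x'y''$ whenever $x<y$, note that matchings correspond to chain decompositions in which the number of chains equals $\vert P\vert$ minus the matching size, and convert a minimum vertex cover into a maximum antichain; but since K\"onig's theorem is not developed in the text, the self-contained induction seems preferable.
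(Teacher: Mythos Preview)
Your argument is correct: this is essentially Perles' short inductive proof of Dilworth's theorem, and the bookkeeping you flag (strict properness of $D^-$ and $D^+$, extremality of the $a_i$ in each piece so that the glued sets $C_i^-\cup C_i^+$ are chains) is handled cleanly. One small point worth making explicit is that $a_i$ is maximal in $D^-$: if $a_i<x$ for some $x\in D^-$, then $x\leq a_j$ for some $j$, forcing $a_i<a_j$, which contradicts that $A$ is an antichain; the dual statement gives minimality in $D^+$.

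However, the paper does not supply its own proof of this theorem: Dilworth's result is simply quoted with a reference to \cite{D} and used as background for the notion of a Dilworth decomposition. So there is nothing to compare your approach against---you have provided a complete, self-contained proof where the paper has none. Your alternative via K\"onig's theorem is also a valid route, but you are right that the inductive argument is preferable here since no matching theory is developed in the text.
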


  A chain $C=(a_1\leq a_2\leq\ldots\leq a_k)$ in $P$ is called \textit{saturated} if $a_{i+1}$ covers $a_i$ for all $1\leq i\leq k-1$. Suppose that the poset $(P,\leq)$ is graded with degree map $\operatorname{deg}\colon P\to\mathbb{N}$ and has finite rank $n=\rk(P)$. A chain $C=(a_1\leq a_2\leq\ldots\leq a_k)$ in $P$ is called \textit{symmetric} if it is saturated and the equality $\operatorname{deg}(a_1)+\operatorname{deg}(a_k)=n$ holds. A chain decomposition $P=C_1\sqcup C_2\sqcup\ldots \sqcup C_k$ is called \textit{symmetric} if every chain $C_i$ is symmetric. We say that the graded poset $(P,\leq)$ is a \textit{symmetric chain order} if it admits a symmetric chain decomposition. Engel \cite[Theorem 5.1.4]{E} proves a relationship between symmetric chain orders and Sperner posets:

  \begin{theorem}
  If the graded poset $(P,\leq)$ is a symmetric chain order, then it is Sperner.
  \end{theorem}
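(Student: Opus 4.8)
The plan is to combine the elementary upper bound on antichain sizes that comes from \emph{any} chain decomposition with the observation that each chain of a symmetric chain decomposition is forced to pass through the middle layer of the poset.

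Concretely, I would fix a symmetric chain decomposition $P=C_1\sqcup C_2\sqcup\ldots\sqcup C_k$, put $n=\rk(P)$ and $m=\lfloor n/2\rfloor$, and first show that every $C_i$ contains exactly one element of degree $m$. Writing $C_i=(a_1\leq a_2\leq\ldots\leq a_\ell)$ and using that $C_i$ is saturated, we have $\operatorname{deg}(a_{j+1})=\operatorname{deg}(a_j)+1$ for all $j$, so the set of degrees occurring on $C_i$ is the block of consecutive integers $\{\operatorname{deg}(a_1),\operatorname{deg}(a_1)+1,\ldots,\operatorname{deg}(a_\ell)\}$. Symmetry gives $\operatorname{deg}(a_1)+\operatorname{deg}(a_\ell)=n$, and since $a_1\leq a_\ell$ we have $\operatorname{deg}(a_1)\leq\operatorname{deg}(a_\ell)$, whence $\operatorname{deg}(a_1)\leq n/2\leq\operatorname{deg}(a_\ell)$; as $m$ is an integer with $\operatorname{deg}(a_1)\leq m\leq\operatorname{deg}(a_\ell)$, it lies in this block. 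Because the chains are pairwise disjoint and exhaust $P$, the assignment sending $C_i$ to its unique degree-$m$ element is a bijection onto $P_m$, so $k=\vert P_m\vert$.

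Now, as already remarked after the definition of chain decompositions, any antichain $\mathcal{F}\subseteq P$ satisfies $\vert\mathcal{F}\vert\leq k$, since $\mathcal{F}$ meets each chain $C_i$ in at most one element. Combining this with $k=\vert P_m\vert$ yields $\vert\mathcal{F}\vert\leq\vert P_m\vert$ for every antichain $\mathcal{F}\subseteq P$. Since $P_m$ is itself an antichain, it is a maximum antichain, and therefore $P$ is Sperner by definition, with witness $i=m$.

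The only real content is the ``middle-layer'' step, and I do not expect any genuine obstacle there: saturation pins down that each chain occupies a full interval of consecutive degrees, and symmetry forces that interval to straddle $n/2$, so it must contain $\lfloor n/2\rfloor$. Everything else is the chain-decomposition inequality already recalled in the paper, applied in the direction ``few chains $\Rightarrow$ small antichains,'' together with the trivial fact that a single degree-layer is an antichain.
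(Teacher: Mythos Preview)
Your argument is correct and is exactly the standard proof: each symmetric chain, being saturated with endpoint degrees summing to $n$, occupies a block of consecutive degrees straddling $n/2$ and hence meets $P_{\lfloor n/2\rfloor}$ in a single element; counting chains gives $k=\lvert P_{\lfloor n/2\rfloor}\rvert$, and the chain-decomposition bound finishes it.

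Note, however, that the paper does not give its own proof of this theorem. It is stated as a quoted result of Engel \cite[Theorem 5.1.4]{E}, so there is nothing in the paper to compare your approach against. Your write-up supplies precisely the argument one would expect for this citation.
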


  As an application of the theorem, M\"uhle \cite{M} shows that certain posets of noncrossing partitions are Sperner. Noncrossing partition posets can be attached to Coxeter groups and play an important role in combinatorics and representation theory.

  Suppose that $(P,\leq_P)$ and $(Q,\leq_Q)$ are two posets. The \textit{direct sum} is the partial order $\leq_{P\times Q}$ on the set $P\times Q$ such that $(a,b)\leq_{P\times Q} (c,d)$ if and only if $a\leq_P c$ and $b\leq_Q d$. If $P$ and $Q$ are graded posets with degree maps $\operatorname{deg}_P$ and $\operatorname{deg}_Q$, then the direct product $(P\times Q,\leq_{P\times Q})$ is graded with collated degree map \linebreak$\operatorname{deg}_{P\times Q}(a,b)=\operatorname{deg}_P(a)+\operatorname{deg}_Q(b)$ for all $a\in P,~b\in Q$. The next theorem is a product theorem for symmetric chain orders. The main idea is due to de Bruijn, van Ebbenhorst Tengbergen and Kruyswijk \cite{BEK} and formal proofs are due to Aigner \cite{Ai}, Alekseev \cite{Al} and Griggs \cite{Gr}.

  \begin{theorem}
  If the graded posets $(P,\leq_P)$ and $(Q,\leq_Q)$ are both symmetric chain orders, then the direct product $(P\times Q,\leq_{P\times Q})$ is a symmetric chain order as well.
  \end{theorem}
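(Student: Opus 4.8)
The plan is to reduce the assertion to one combinatorial fact — that the product of two finite chains, graded by the sum of coordinates, is a symmetric chain order — and then to paste such decompositions together over the product of given symmetric chain decompositions of $P$ and $Q$, keeping careful track of the grading. First I would fix symmetric chain decompositions $P=C_1\sqcup\cdots\sqcup C_p$ and $Q=D_1\sqcup\cdots\sqcup D_q$, so that $P\times Q=\bigsqcup_{s,t}(C_s\times D_t)$ as a disjoint union of subsets. For each pair $(s,t)$ I would record three facts about the subposet $C_s\times D_t$ with the grading inherited from $P\times Q$, i.e. with $\deg(a,b)=\deg_P(a)+\deg_Q(b)$: (i) since $C_s$ and $D_t$ are \emph{saturated}, a cover relation inside $C_s\times D_t$ is again a cover relation in $P\times Q$ — in a cover of a direct product exactly one coordinate moves, and it moves by a cover-step — so a saturated chain of $C_s\times D_t$ is a saturated chain of $P\times Q$; (ii) as an abstract graded poset $C_s\times D_t$ is isomorphic to $\mathbf{m_s}\times\mathbf{k_t}$ with $m_s=|C_s|$, $k_t=|D_t|$, graded by the coordinate sum shifted by the constant $o_{s,t}=\deg_P(\min C_s)+\deg_Q(\min D_t)$; (iii) since $C_s$ and $D_t$ are \emph{symmetric}, $2\deg_P(\min C_s)=\rk P-(m_s-1)$ and $2\deg_Q(\min D_t)=\rk Q-(k_t-1)$, whence, using $\rk(P\times Q)=\rk P+\rk Q$ and $\rk(\mathbf{m_s}\times\mathbf{k_t})=m_s+k_t-2$, one gets $2o_{s,t}=\rk(P\times Q)-\rk(\mathbf{m_s}\times\mathbf{k_t})$.

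Next I would prove the key lemma: for all $m,k\ge 1$ the grid $\mathbf m\times\mathbf k$, graded by the coordinate sum, is a symmetric chain order. This goes by induction on $m+k$; the cases $m=1$ or $k=1$ are single chains, hence symmetric. For $m,k\ge 2$ peel off the hook
\[
H=\{(i,0):0\le i\le m-1\}\cup\{(m-1,j):0\le j\le k-1\},
\]
a saturated chain with bottom and top degrees $0$ and $m+k-2=\rk(\mathbf m\times\mathbf k)$, hence symmetric. The complement $(\mathbf m\times\mathbf k)\setminus H=\{(i,j):0\le i\le m-2,\ 1\le j\le k-1\}$ is, via $(i,j)\mapsto(i,j-1)$, a copy of $\mathbf{(m-1)}\times\mathbf{(k-1)}$ with the coordinate-sum grading shifted up by $1$; this shift keeps the grading centred about $(m+k-2)/2$, so the inductive symmetric chain decomposition transports to saturated chains of $\mathbf m\times\mathbf k$ whose bottom and top degrees sum to $1+(m+k-3)=m+k-2$. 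Together with $H$ these give the required decomposition. (Alternatively one may simply quote \cite{BEK,Ai,Al,Gr} for this step.)

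Finally I would assemble the pieces: apply the key lemma to each $\mathbf{m_s}\times\mathbf{k_t}$, transport the resulting symmetric chains to $C_s\times D_t$ via the isomorphism of (ii), so that their degrees are the coordinate-sum degrees shifted by $o_{s,t}$. By (i) these are saturated chains of $P\times Q$; by (iii) a chain that was symmetric for $\rk(\mathbf{m_s}\times\mathbf{k_t})$ now has bottom and top degrees summing to $\rk(\mathbf{m_s}\times\mathbf{k_t})+2o_{s,t}=\rk(P\times Q)$, i.e. it is symmetric in $P\times Q$. Since $P\times Q=\bigsqcup_{s,t}(C_s\times D_t)$, ranging over all $(s,t)$ yields a symmetric chain decomposition of $P\times Q$, so $P\times Q$ is a symmetric chain order. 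I expect the only real obstacle to be bookkeeping: keeping the offsets $o_{s,t}$ straight so that ``symmetric'' is always measured against the correct rank. The geometric input (hook-peeling for a grid) is an easy induction, and the structural input (a sub-box of a product of graded posets inherits covers and grading correctly) is a short check.
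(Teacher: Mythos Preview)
Your proposal is correct and follows the classical line the paper itself only cites: the paper gives no proof of this theorem, attributing the idea to de~Bruijn--van~Ebbenhorst~Tengbergen--Kruyswijk and the formal arguments to Aigner, Alekseev, and Griggs. Your reduction to the grid $\mathbf m\times\mathbf k$ together with the hook-peeling induction and the offset bookkeeping via $2o_{s,t}=\rk(P\times Q)-\rk(\mathbf{m_s}\times\mathbf{k_t})$ is exactly the standard proof from those references, so there is nothing to compare beyond noting that you have supplied what the paper leaves to citation.
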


  \begin{ex}
  \label{ex:chainprod}
  For a natural number $k$ the poset $Ch(k)=(0\leq 1\leq \ldots\leq k)$ is called the \textit{chain poset} of length $k+1$. It becomes a graded poset when we define $\operatorname{deg}(a)=a$ for all $0\leq a\leq k$. By construction the chain poset $Ch(k)$ is symmetric chain order. For natural numbers $k_1, k_2,\ldots,k_r$ let $Ch(k_1,k_2,\ldots,k_r)$ be the set of all sequences $(a_1,a_2,\ldots,a_r)\in \mathbb{N}^r$ such that $0\leq a_i\leq k_i$ for all $1\leq i\leq r$. We order the set by the dominance order, i.\,e. we say $(a_1,a_2,\ldots a_r)\leq (b_1,b_2,\ldots,b_r)$ if and only if $a_i\leq b_i$ for all $1\leq i\leq r$. Clearly, we have $Ch(k_1,k_2,\ldots,k_r)= Ch(k_1)\times Ch(k_2)\times\ldots\times Ch(k_r)$ and the product theorem implies that the poset $Ch(k_1,k_2,\ldots,k_r)$ is a symmetric chain order and hence Sperner. The poset is known as the \textit{chain product}.
  \end{ex}

  \subsection{Quiver representations}\label{subsec:quivrep}

  A \textit{quiver} $Q=(Q_0,Q_1)$ is a finite directed graph with vertex set $Q_0$ and arrow set $Q_1$. A vertex $i\in Q_0$ is called a \textit{source} if there does not exist an arrow $\alpha\in Q_1$ that ends in $i$. Similarly, a vertex $i\in Q_0$ is called a \textit{sink} if there does not exist an arrow $\alpha\in Q_1$ that starts in $i$.

  We fix a field $k$. A \textit{representation} of $Q$ is a collection $V=((V_i)_{i\in Q_0},(V_{\alpha})_{\alpha\in Q_1})$ consisting of a finite-dimensional $k$-vector space $V_i$ for every vertex $i\in Q_0$ and a $k$-linear map $V_{\alpha}\colon V_i\to V_j$ for every arrow $\alpha\colon i\to j$ in $Q_1$. We denote by $\underline{\operatorname{dim}} V=(\operatorname{dim}_{k} V_i)_{i\in Q_0}$ the \textit{dimension vector} of $V$. The \textit{support} of $V$ is defined as the set $\operatorname{supp}(V)=\{i\in Q_0\colon V_i\neq 0\}$. Furthermore, the sum $\operatorname{dim}_k(V)=\dim_{i\in Q_0}\operatorname{dim}_k(V_i)$ is called the \textit{dimension} of $V$.

  A \textit{subrepresentation} $U$ of $V$ is representation of $Q$ such that $U_i\subseteq V_i$ is a $k$-vector subspace for every vertex $i\in Q_i$ and $U_{\alpha}(x)=V_{\alpha}(x)$ for every arrow $\alpha\colon i\to j$ in $Q_1$ and every element $x\in U_i$. In particular, we have $U_{\alpha}(U_i)\subseteq U_j$ for every arrow $\alpha$. Given a subrepresentation $U \subseteq V$, we can define a quotient representation $V/U$ by vector spaces $(V/U)_i=V_i/U_i$ for all vertices $i\in Q_0$ and induced canonical maps $(V/U)_{\alpha}\colon V_i/U_i\to V_j/U_j$ for all arrows $\alpha\colon i\to j$. A representation $V$ is called \textit{simple} if it does not admit a non-zero proper subrepresentation $0 \subsetneqq U \subsetneqq V$. Suppose that $V,W$ are two representations of the same quiver $Q$. A \textit{morphism} $\phi\colon V\to W$ is a collection of $k$-linear maps $\phi_i\colon V_i\to W_i$ for all vertices $i\in Q_0$ such that $W_{\alpha}\circ\phi_i=\phi_j\circ V_{\alpha}$ for all arrows $\alpha\colon i\to j$ in $Q_1$. The morphism with $\phi_i = 0$ for all $i\in Q_0$ is called the \textit{zero morphism}. A morphism $\phi=(\phi_i)_{i\in Q_0}$ is called a \textit{monomorphism} if every linear map $\phi_i$ is injective. If $U$ is a subrepresentation of $V$, then the family of canonical inclusions $U_i \hookrightarrow V_i$ provides a basic example of a monomorphism $\phi:U\to V$.
  Dually, a morphism $\phi$ is called an \textit{epimorphism} if every linear map $\phi_i$ is surjective. A morphism $\phi\colon V\to W$ is called an \textit{isomorphism} if it is both a monomorphism and an epimorphism. In this case we say that $V$ and $W$ are \textit{isomorphic} and we write $V\cong W$.

  The representation $V$ with $V_i=0$ for all $i\in Q_0$ is called the \textit{zero representation}, where necessarily $V_{\alpha}=0$ for all $\alpha\in Q_1$. Suppose that $V,W$ are two representations of the same quiver $Q$. The \textit{direct sum} $V\oplus W$ is the representation with $(V\oplus W)_i=V_i\oplus W_i$ for all vertices $i\in Q_0$ and \[(V\oplus W)_{\alpha}=\left(\begin{smallmatrix}V_{\alpha}&0\\0&W_{\alpha}\end{smallmatrix}\right)\colon V_i\oplus W_i\to V_j\oplus W_j\] for all arrows $\alpha\colon i\to j$. A representation is called \textit{decomposable} if it is isomorphic to a direct sum $V\oplus W$ with $V,W\neq 0$. It is called \textit{indecomposable} otherwise. Note that every simple representation is indecomposable but the reverse statement does not hold in general. A quiver is called \textit{representation finite} if there are only finitely many indecomposable representations up to isomorphism. It is called \textit{representation infinite} otherwise.

  Let a \textit{path of length $n$} be an undirected graph as in Figure~\ref{fig:ungraph}.
  \begin{figure}
  	\centering
  	\begin{tikzpicture}[every node/.style={scale=0.775}]
      	\node (1) at (1,0) {1};
          \node (2) at (2,0) {2};
          \node (3) at (3,0) {3};
          \node (5) at (5,0) {$n$};
          \draw (1) to (2);
          \draw (2) to (3);
          \draw (3) -- (3.75,0);
          \draw[dotted] (3.8,0) -- (4.2,0);
          \draw (4.25,0) -- (5);
      \end{tikzpicture}
  	\caption{A path with $n$ vertices}\label{fig:ungraph}
  \end{figure}
  To unify the description of quivers which are representation finite, let us introduce \textit{star-shaped undirected graphs} as graphs with a central vertex $c$ from which $r$-many paths of varying lengths start. More formally, for integers $r\geq 0$ and $\ell_1, \ldots, \ell_r \geq\nolinebreak 1$ let $\operatorname{Star}(\ell_1, \ldots, \ell_r)$ be the graph with $n=1+\sum_{i=1}^r \ell_i$ many vertices and edges $\begin{tikzcd}c \ar[-]{r} &v_{i,1}\end{tikzcd}$\kern-0.5em, $\begin{tikzcd}v_{i,j} \ar[-]{r} &v_{i,j+1}\end{tikzcd}$ for $1\leq i \leq r$ and $1 \leq j \leq \ell_i-1$. Pictorially such a graph can be seen in Figure~\ref{fig:ungraphstar}.

  \begin{figure}
  	\centering
  	\begin{tikzpicture}[every node/.style={scale=0.775}]
      	\node (c) at (0:0) {$c$};
          \node (v11) 	at (140:1) {$v_{1,1}$};
          \node (v12) 	at (140:2) {$v_{1,2}$};
          \node (v13) 	at (140:3) {};
          \node (v1l1-1) 	at (140:4) {};
          \node (v1l1) 	at (140:5) {$v_{1,\ell_1}$};
          \node (v21) 	at (180:1) {$v_{2,1}$};
          \node (v22) 	at (180:2) {$v_{2,2}$};
          \node (v23) 	at (180:3) {};
          \node (v2l2-1) 	at (180:4) {};
      	\node (v2l2) 	at (180:5) {$v_{2,\ell_2}$};
          \node (v31) 	at (220:1) {$v_{3,1}$};
          \node (v32) 	at (220:2) {$v_{3,2}$};
          \node (v33) 	at (220:3) {};
          \node (v3l3-1) 	at (220:4) {};
      	\node (v3l3) 	at (220:5) {$v_{3,\ell_3}$};
          \node (vr-11) 	at (340:1) {$v_{r-1,1}$};
          \node (vr-12) 	at (340:2) {$v_{r-1,2}$};
          \node (vr-13) 	at (340:3) {};
          \node (vr-1lr-1-1)at (340:4) {};
      	\node (vr-1lr-1) 	at (340:5) {$v_{r-1,\ell_{r-1}}$};
          \node (vr1) 	at (20:1) {$v_{r,1}$};
          \node (vr2) 	at (20:2) {$v_{r,2}$};
          \node (vr3) 	at (20:3) {};
          \node (vrlr-1) 	at (20:4) {};
      	\node (vrlr) 	at (20:5) {$v_{r,\ell_r}$};

          \draw 	(c) -- (v11) (c) -- (v21) (c) -- (v31) (c) -- (vr-11) (c) -- (vr1)
          		(v11) -- (v12) (v12) -- (v13) (v1l1-1) -- (v1l1)
                  (v21) -- (v22) (v22) -- (v23) (v2l2-1) -- (v2l2)
                  (v31) -- (v32) (v32) -- (v33) (v3l3-1) -- (v3l3)
                  (vr-11) -- (vr-12) (vr-12) -- (vr-13) (vr-1lr-1-1) -- (vr-1lr-1)
                  (vr1) -- (vr2) (vr2) -- (vr3) (vrlr-1) -- (vrlr);
          \draw[dotted] (v13) -- (v1l1-1) (v23) -- (v2l2-1) (v33) -- (v3l3-1) (vr-13) -- (vr-1lr-1-1) (vr3) -- (vrlr-1);
          \draw[thick,dotted] ([shift=(240:2.5cm)]0,0) arc (240:320:2.5cm);
      \end{tikzpicture}
  	\caption{Star-shaped undirected graph}\label{fig:ungraphstar}
  \end{figure}
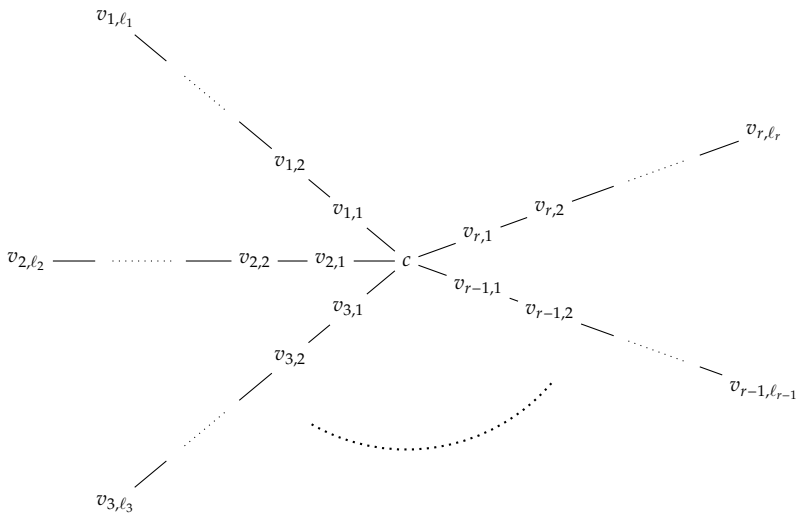

  A star-shaped undirected graph with $n$ vertices is said to be a \textit{Dynkin diagram of type $A_n$} if \linebreak$r=1, \ell_1 = n-1$, it is \textit{Dynkin diagram of type $D_n$} if $r=3,\ell_1=\ell_2=1,\ell_3=n-3$ and \textit{Dynkin diagram of type $E_n$} if $r=3, \ell_1=1, \ell_2=2$ and $\ell_3=n-4$ for $n \in \{6,7,8\}$.

  Gabriel \cite{G} then classifies representation finite quivers as follows:

  \begin{theorem}[Gabriel]\label{thm:gabriel}
  A (non-empty) connected quiver with $n$ vertices is representation finite if and only if its underlying undirected graph is a Dynkin diagram of type $A_n$, $D_n$ or $E_n$. In this case, the map $V\mapsto \underline{\operatorname{dim}} V$ induces a bijection between the isomorphism classes of indecomposable representations and the positive roots in the corresponding root system.
  \end{theorem}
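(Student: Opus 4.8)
\emph{Strategy.} The plan is to run the standard argument via the Tits form together with Bernstein--Gelfand--Ponomarev reflection functors. For a dimension vector $d\in\mathbb{Z}^{Q_0}$ define the \emph{Euler form} $\langle d,e\rangle=\sum_{i\in Q_0}d_ie_i-\sum_{(\alpha\colon i\to j)\in Q_1}d_ie_j$ and the \emph{Tits form} $q(d)=\langle d,d\rangle$. Since $Q$ is acyclic, the path algebra $kQ$ is hereditary, which gives the homological identity $\langle\underline{\dim}V,\underline{\dim}W\rangle=\dim_k\operatorname{Hom}(V,W)-\dim_k\operatorname{Ext}^1(V,W)$ for all representations, and in particular $q(\underline{\dim}V)=\dim_k\operatorname{End}(V)-\dim_k\operatorname{Ext}^1(V,V)$. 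A purely linear-algebraic computation (diagonalising the symmetrised form) shows that $q$ is positive definite exactly when the underlying graph is a disjoint union of Dynkin diagrams of type $A$, $D$, $E$, and positive semidefinite with radical $\mathbb{Z}\delta$ for a positive vector $\delta$ (the null root) exactly for the extended Dynkin diagrams $\widetilde{A}_n,\widetilde{D}_n,\widetilde{E}_n$; moreover every connected graph that is not a Dynkin diagram contains a full subgraph isomorphic to one of the $\widetilde{A},\widetilde{D},\widetilde{E}$. Finally, for a Dynkin graph the positive roots of the associated root system are precisely the nonzero $d\in\mathbb{N}^{Q_0}$ with $q(d)=1$, and each lies in the Weyl group orbit of a simple root $e_i$.

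\emph{The ``if'' direction.} Assume the underlying graph $\Gamma$ of $Q$ is Dynkin, so $q$ is positive definite. First one shows $q(\underline{\dim}V)=1$ for every indecomposable $V$. Working over $\bar k$, consider the affine variety $\operatorname{Rep}(Q,d)$ with $d=\underline{\dim}V$, on which $\operatorname{GL}(d)=\prod_i\operatorname{GL}_{d_i}$ acts with orbits the isomorphism classes; here $\dim\operatorname{GL}(d)-\dim\operatorname{Rep}(Q,d)=q(d)$, the orbit of $V$ has codimension $\dim_k\operatorname{Ext}^1(V,V)$, and its stabiliser has dimension $\dim_k\operatorname{End}(V)$. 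If $Q$ is representation finite, $\operatorname{Rep}(Q,d)$ is a finite union of orbits, so the orbit of $V$ is dense, forcing $\operatorname{Ext}^1(V,V)=0$; combined with the reflection-functor induction below one gets $\operatorname{End}(V)=k$, hence $q(d)=1$ and $d$ is a positive root. Since a positive definite integral form has only finitely many vectors of value $1$, $Q$ is representation finite and $V\mapsto\underline{\dim}V$ lands in the finite set of positive roots. For bijectivity, recall that for a sink $i$ the reflection functor $S_i^+\colon\operatorname{rep}(Q)\to\operatorname{rep}(\sigma_iQ)$ (with $\sigma_iQ$ obtained by reversing the arrows at $i$) annihilates the simple $S_i$, restricts to a bijection between indecomposables of $Q$ not isomorphic to $S_i$ and indecomposables of $\sigma_iQ$ not isomorphic to $S_i$ with inverse $S_i^-$, and acts on dimension vectors by the simple reflection $s_i$; dually for a source. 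Given a positive root $d$, choose an admissible sequence of sinks and write $d=s_{i_1}\cdots s_{i_{m-1}}(e_{i_m})$; then $V=S_{i_1}^-\cdots S_{i_{m-1}}^-(S_{i_m})$ is an indecomposable with $\underline{\dim}V=d$, and since the only indecomposable of dimension $e_{i_m}$ is $S_{i_m}$, the chain of bijections shows $V$ is the unique such indecomposable. Hence $V\mapsto\underline{\dim}V$ is a bijection onto the positive roots.

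\emph{The ``only if'' direction.} Suppose $\Gamma$ is not Dynkin. It then has a full subgraph which is extended Dynkin, and the corresponding full subquiver $Q'\subseteq Q$ has the property that extending a representation of $Q'$ by zero on the remaining vertices and arrows produces a representation of $Q$, carrying indecomposables to pairwise non-isomorphic indecomposables. So it suffices to show that any quiver with extended Dynkin underlying graph has infinitely many indecomposables up to isomorphism. After fixing a convenient orientation via reflection functors, the indecomposables $(C^-)^m(P_i)$ obtained by iterating the inverse Coxeter functor on the indecomposable projectives form an infinite family, since the Coxeter transformation has infinite order on $\mathbb{Z}^{Q_0}$ when $q$ is merely positive semidefinite, so these dimension vectors are pairwise distinct and remain positive. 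Concretely, for the Kronecker quiver ($\widetilde{A}_1$) the representations with $V_1=k^{n}$, $V_2=k^{n+1}$ and the two arrows acting as the inclusions of the first and of the last $n$ coordinates are indecomposable for every $n\geq 0$. This establishes the equivalence, and the bijection was proved above.

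\emph{Main difficulty.} The substantive points are establishing the bound $q(\underline{\dim}V)\leq 1$ for indecomposables --- equivalently that $\operatorname{End}(V)$ is a division ring and $\operatorname{Ext}^1(V,V)=0$ --- which needs either the geometry of $\operatorname{GL}(d)$ acting on $\operatorname{Rep}(Q,d)$ together with base change to $\bar k$, or a delicate simultaneous induction on $\dim V$ using reflection functors, and the bookkeeping that guarantees the reflection reduction reaches every positive root exactly once. The classification of graphs by definiteness of the Tits form, the presence of affine full subgraphs in every non-Dynkin connected graph, and the construction of the reflection functors with their effect $s_i$ on dimension vectors are routine inputs that must nonetheless be recorded or cited.
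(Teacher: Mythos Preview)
The paper does not give a proof of this theorem; it is quoted as Gabriel's classical result with a reference to \cite{G} and then used as background. So there is no in-paper argument to compare your proposal against.

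Your sketch follows the standard Bernstein--Gelfand--Ponomarev / Tits-form route, and the overall architecture is sound. There is, however, a circularity in the ``if'' paragraph: you write ``If $Q$ is representation finite, $\operatorname{Rep}(Q,d)$ is a finite union of orbits, so the orbit of $V$ is dense, forcing $\operatorname{Ext}^1(V,V)=0$'', but representation-finiteness of $Q$ is exactly the conclusion you are trying to reach in that paragraph, so you cannot invoke it to obtain the vanishing of $\operatorname{Ext}^1(V,V)$. The reflection-functor material you already have is what actually does the work: for an indecomposable $V$ one applies an admissible sequence of sink reflections (equivalently iterates the Coxeter functor $C^+$); either some iterate kills $V$, in which case the step before was a simple $S_i$ and hence $\underline{\dim}V$ lies in the Weyl orbit of $e_i$ and is a positive root, or no iterate kills $V$, which is impossible when $q$ is positive definite because the dimension vectors of the iterates would be infinitely many distinct nonnegative vectors with $q$-value $1$. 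This simultaneously yields $q(\underline{\dim}V)=1$, finiteness, and the bijection, without the orbit-density step. Reorder your ``if'' argument to lead with the reflection-functor reduction and drop the sentence that presupposes representation-finiteness; the geometric observation $\dim\operatorname{GL}(d)-\dim\operatorname{Rep}(Q,d)=q(d)$ is then optional commentary rather than a load-bearing step.
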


  Especially, representation finiteness does only depend on the underlying diagram but not on the orientation. We say that quivers as in Theorem~\ref{thm:gabriel} are of type $A_n$, $D_n$ and $E_n$ respectively and call them \textit{Dynkin} if we do not wish to distinguish between these three families.
  For those readers not familiar with representation theory, let us consider one basic example to clarify the notions above.

  \begin{ex}\label{ex:a2_1}
  Let $Q$ be the quiver $1\stackrel{\alpha}{\longrightarrow} 2$ of type $A_2$.
  One representation is then given by $V_1 = k, V_2 = 0$ and the zero map; denote this representation by $S_1=(k\to 0)$. Since its only proper subrepresentation is the zero representation, we clearly see that it is simple. Similarly, $S_2=(0\to k)$ is a simple and thus also an indecomposable representation.
  Gabriel's theorem asserts the existence of a third indecomposable representation with dimension vector $(1,1)$, namely the representation $P_1 = (k \stackrel{\id}{\longrightarrow} k)$. It is an easy observation that the zero morphism is the only morphism from $S_1$ to $P_1$, i.\,e. the left diagram in Figure~\ref{fig:S1S2P1} commutes if and only if $\phi_1 = 0 = \phi_2$.
  \begin{figure}[H]
  	\centering
  	\begin{tikzpicture}
      	\node (P1) at (-0.75,0) {$P_1=$};
          \node (S1) at (-0.75,1.5) {$S_1=$};
      	\node (1) at (0,0) {$(k$};
          \node (2) at (2,0) {$k)$};
          \node (3) at (0,1.5) {$(k$};
          \node (4) at (2,1.5) {0)};
          \draw[-angle 90,relative] (1) to node[below] {$\id$}(2);
          \draw[-angle 90,relative] (3) to node[above] {0}(4);
          \draw[-angle 90,relative] (3) to node[left] {$\phi_1$} (1);
          \draw[-angle 90,relative] (4) to node[right] {$\phi_2$} (2);
      \end{tikzpicture}
      \hspace{2cm}
  	\begin{tikzpicture}
      	\node (P1) at (-0.75,0) {$P_1=$};
          \node (S2) at (-0.75,1.5) {$S_2=$};
      	\node (1) at (0,0) {$(k$};
          \node (2) at (2,0) {$k)$};
          \node (3) at (0,1.5) {$(0$};
          \node (4) at (2,1.5) {$k)$};
          \draw[-angle 90,relative] (1) to node[below] {$\id$}(2);
          \draw[-angle 90,relative] (3) to node[above] {0}(4);
          \draw[-angle 90,relative] (3) to node[left] {$\psi_1$} (1);
          \draw[-angle 90,relative] (4) to node[right] {$\psi_2$} (2);
      \end{tikzpicture}
  	\caption{Morphisms between indecomposable representations of a quiver of type $A_2$}\label{fig:S1S2P1}
  \end{figure}
  \noindent
  On the other hand, the choice $\psi_1=0$ and $\psi_2 =\id$ makes the right diagram of Figure~\ref{fig:S1S2P1} commutative, hence we obtain a nonzero morphism from $S_2$ to $P_1$. Since the identity map is injective, the morphism $(\psi_1,\psi_2)$ is even a monomorphism of representations.
  By Gabriel's theorem, a general representation has the form $V=S_1^{a}\oplus P_1^{b}\oplus S_2^{c}$ for some integers $a,b,c\geq 0$, so that $V_1=k^{a}\oplus k^b$, $V_2=k^b\oplus k^c$, and $V_{\alpha}=\left(\begin{smallmatrix}0&\id\\0&0\end{smallmatrix}\right)$ in block form.
  \end{ex}

  In this article, we wish to study maximum antichains in posets attached to various quivers representations. On a related note, Ringel \cite{R} studies maximal antichains in the product ordering on the set of dimension vectors of indecomposable representations of a Dynkin quiver.

  \section{Maximum antichains in monomorphism posets of indecomposable representations for type \texorpdfstring{$A$}{A} quivers}

  \subsection{Poset properties}
  Let $n\geq 1$ be a natural number and $Q$ be a Dynkin qiver of type $A_n$.
  For natural numbers $1\leq a\leq b\leq n$ use the shorthand notation $[a,b]$ for the representation with vector spaces $V_i = k$ for $a\leq i \leq b$ and $V_i=0$ elsewhere, and linear maps $V_\alpha = (V_i \to V_j) =\id_k$ for arrows $\alpha\colon i\to j$ with $a\leq i,j \leq b$ and $V_\alpha=0$ for all others. In the case where $a=b$ we simply write $[a]$ instead of $[a,a]$.

  \begin{ex}\label{ex:a2_2}
  Consider again the quiver $Q$ from Example~\ref{ex:a2_1}. Then $S_1=[1]$, $S_2=[2]$ and $P_1 = [1,2]$.
  \end{ex}

  As a consequence of Gabriel's theorem, every indecomposable representation of $Q$ is isomorphic to $[a,b]$ for appropriate choices of $1\leq a\leq b\leq n$.

  \begin{ex}\label{ex:zick_A3}
      Let $Q$ be the quiver $1 \leftarrow 2 \to 3$. The set of isomorphism classes of indecomposable representations contains 6 elements: $[1],~[2],~[3],~[1,2],~[2,3],~[1,3]$. These representations are visualized in Figure~\ref{fig:zick_A3}. To simplify the notation, whenever we draw an arrow between one-dimensional vector spaces, we assume that the associated map is the identity.
      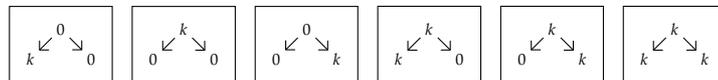
\begin{figure}[H]
              \centering
              \begin{tikzpicture}
                  \node (v1) {
                      \begin{tikzpicture}[scale=\scalingFactorSmall, every node/.style={scale=0.6}, framed]
                          \begin{scope}
                              \node[inner sep=.1cm] (1) at  (0,0) {$k$};
                              \node[inner sep=.1cm] (2) at  (1,1) {0};
                              \node[inner sep=.1cm] (3) at  (2,0) {0};
                              \draw[-angle 90,relative] (2) to (1);
                              \draw[-angle 90,relative] (2) to (3);
                          \end{scope}
                      \end{tikzpicture}};
                  \node[right=0cm of v1] (v2) {
                      \begin{tikzpicture}[scale=\scalingFactorSmall, every node/.style={scale=0.6}, framed]
                          \begin{scope}
                              \node[inner sep=.1cm] (1) at  (0,0) {0};
                              \node[inner sep=.1cm] (2) at  (1,1) {$k$};
                              \node[inner sep=.1cm] (3) at  (2,0) {0};
                              \draw[-angle 90,relative] (2) to (1);
                              \draw[-angle 90,relative] (2) to (3);
                          \end{scope}
                      \end{tikzpicture}};
                  \node[right=0cm of v2] (v3) {
                      \begin{tikzpicture}[scale=\scalingFactorSmall, every node/.style={scale=0.6}, framed]
                          \begin{scope}
                              \node[inner sep=.1cm] (1) at  (0,0) {0};
                              \node[inner sep=.1cm] (2) at  (1,1) {0};
                              \node[inner sep=.1cm] (3) at  (2,0) {$k$};
                              \draw[-angle 90,relative] (2) to (1);
                              \draw[-angle 90,relative] (2) to (3);
                          \end{scope}
                      \end{tikzpicture}};
                  \node [right=0cm of v3] (v1-2) {
                      \begin{tikzpicture}[scale=\scalingFactorSmall, every node/.style={scale=0.6}, framed]
                          \begin{scope}
                              \node[inner sep=.1cm] (1) at  (0,0) {$k$};
                              \node[inner sep=.1cm] (2) at  (1,1) {$k$};
                              \node[inner sep=.1cm] (3) at  (2,0) {0};
                              \draw[-angle 90,relative] (2) to (1);
                              \draw[-angle 90,relative] (2) to (3);
                          \end{scope}
                      \end{tikzpicture}};
                  \node [right=0cm of v1-2] (v2-3) {
                      \begin{tikzpicture}[scale=\scalingFactorSmall, every node/.style={scale=0.6}, framed]
                          \begin{scope}
                              \node[inner sep=.1cm] (1) at  (0,0) {0};
                              \node[inner sep=.1cm] (2) at  (1,1) {$k$};
                              \node[inner sep=.1cm] (3) at  (2,0) {$k$};
                              \draw[-angle 90,relative] (2) to (1);
                              \draw[-angle 90,relative] (2) to (3);
                          \end{scope}
                      \end{tikzpicture}};
                  \node [right=0cm of v2-3] (v1-3) {
                      \begin{tikzpicture}[scale=\scalingFactorSmall, every node/.style={scale=0.6}, framed]
                          \begin{scope}
                              \node[inner sep=.1cm] (1) at  (0,0) {$k$};
                              \node[inner sep=.1cm] (2) at  (1,1) {$k$};
                              \node[inner sep=.1cm] (3) at  (2,0) {$k$};
                              \draw[-angle 90,relative] (2) to (1);
                              \draw[-angle 90,relative] (2) to (3);
                          \end{scope}
                      \end{tikzpicture}};
              \end{tikzpicture}
          \caption{Indecomposable representations of alternating $A_3$}\label{fig:zick_A3}
      \end{figure}
  \end{ex}

  For the rest of this section, let $(\mathcal{P}_Q,\leq)$ be the poset with set $\mathcal{P}_Q=\big\{[a,b] \colon 1\leq a \leq b \leq n \big\}$ and $[a,b] \leq [a',b']$ whenever there exists a monomorphism $[a,b] \hookrightarrow [a',b']$. Equivalently, we have $[a,b] \leq [a',b']$ if and only if $[a,b] \subseteq [a',b']$.

  \begin{ex}\label{ex:a3}
  Consider again the quiver as in Example~\ref{ex:zick_A3}. Then $[1] \leq [1,2]$ and $[1] \leq [1,3]$, but \mbox{$[1,2] \not \leq [1,3]$}. The Hasse diagram of this poset is shown in Figure~\ref{fig:hasse_zick_a3}.
  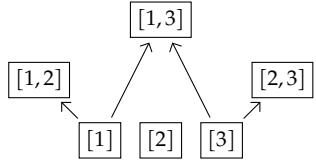
\begin{figure}
  	\centering
      \begin{tikzpicture}[scale=0.8, every node/.style={scale=0.8}]
      	\node[draw] (1) at (0,0) {$[1]$};
          \node[draw] (2) at (1,0) {$[2]$};
          \node[draw] (3) at (2,0) {$[3]$};
          \node[draw] (1-2) at (-1,1) {$[1,2]$};
          \node[draw] (2-3) at (3,1) {$[2,3]$};
          \node[draw] (1-3) at (1,2) {$[1,3]$};
          \draw[-angle 90,relative,shorten >=2pt,shorten <=2pt] (1) to (1-2);
          \draw[-angle 90,relative,shorten >=2pt,shorten <=2pt] (1) to (1-3);
          \draw[-angle 90,relative,shorten >=2pt,shorten <=2pt] (3) to (2-3);
          \draw[-angle 90,relative,shorten >=2pt,shorten <=2pt] (3) to (1-3);
      \end{tikzpicture}
    	\caption{Hasse diagram of indecomposable representations of alternating $A_3$}\label{fig:hasse_zick_a3}
  \end{figure}
  \end{ex}

  \begin{rem}
  The poset $(\mathcal{P}_Q,\leq)$ for $Q$ a Dynkin quiver of type $A_n$ may be graded as in Example~\ref{ex:a3}, but this is not generally the case as we observe later on in Example~\ref{ex:A6_-1-1111}. What is more, for $n>1$ these posets are never bounded and thus never Sperner as the simple representations supported on a source yield isolated vertices in the poset.
  \end{rem}

  \subsection{Linear orientation}\label{subsec:linear}
  In this and the following subsections, we will restrict to particular orientations of the path from Figure~\ref{fig:ungraph}. For now, we consider the linear orientation with unique sink at 1 and unique source at $n$, see Figure~\ref{fig:lin}.
  \begin{figure}[H]
  	\centering
  	\begin{tikzpicture}
      	\node (1) at (1,0) {1};
          \node (2) at (2,0) {2};
          \node (3) at (3,0) {3};
          \node (5) at (5,0) {$n$};
          \draw[-angle 90,relative] (2) to (1);
          \draw[-angle 90,relative] (3) to (2);
          \draw[-angle 90,relative] (3.75,0) -- (3);
          \draw[dotted] (3.8,0) -- (4.2,0);
          \draw[-angle 90,relative] (5) -- (4.25,0);
      \end{tikzpicture}
  	\caption{Linear orientation of type $A_n$}\label{fig:lin}
  \end{figure}

  Then $[a,b]\leq [a',b']$ if and only if $a=a'$ and $b\leq b'$. Hence the poset $(\mathcal{P}_Q,\leq)$ decomposes into $n$ disjoint chains $C_i = \Big([i] \leq [i,i+1] \leq [i,i+2] \leq \ldots \leq [i,n]\Big)$ for all $1\leq i \leq n$ and we immediately obtain the following result.

  \begin{prop}
  	A maximum antichain of $(\mathcal{P}_Q,\leq)$ for linearly oriented $A_n$ consists of exactly $n$ elements.
  \end{prop}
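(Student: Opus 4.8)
The plan is to combine the chain decomposition $\mathcal{P}_Q=C_1\sqcup C_2\sqcup\cdots\sqcup C_n$ exhibited just above, where $C_i=\big([i]\leq[i,i+1]\leq\cdots\leq[i,n]\big)$, with a matching explicit antichain, so that the two bounds meet.

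\emph{Upper bound.} Since the $C_i$ are chains whose disjoint union is all of $\mathcal{P}_Q$, I would invoke the elementary observation recorded in Section~1.1: every chain contains at most one element of a given antichain, hence any antichain $\mathcal{F}\subseteq\mathcal{P}_Q$ satisfies $\vert\mathcal{F}\vert\leq n$. (This is just the easy direction of Dilworth's theorem specialised to this particular decomposition.)

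\emph{Lower bound.} For the reverse inequality I would exhibit the set $\{[1],[2],\ldots,[n]\}$ of simple representations --- equivalently, the minimal elements of the $n$ chains $C_i$. By the comparability criterion stated above, namely $[a,b]\leq[a',b']$ if and only if $a=a'$ and $b\leq b'$, for $i\neq j$ neither $[i]\leq[j]$ nor $[j]\leq[i]$ holds; thus this is an antichain, and it has exactly $n$ elements.

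Combining the two bounds shows that the width of $(\mathcal{P}_Q,\leq)$ equals $n$, so every maximum antichain has exactly $n$ elements. I expect no real obstacle: the only ingredient beyond the chain decomposition is the comparability criterion, and that has already been settled in the discussion preceding the statement (a monomorphism $[a,b]\hookrightarrow[a',b']$ forces $a=a'$ by commutativity at the arrow pointing into vertex $a$).
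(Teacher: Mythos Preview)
Your proposal is correct and follows essentially the same approach as the paper: the paper notes that the comparability criterion $[a,b]\leq[a',b']\iff a=a'$ and $b\leq b'$ makes $\mathcal{P}_Q$ a disjoint union of the $n$ chains $C_i$, from which the result is declared immediate. Your write-up is in fact slightly more explicit than the paper's, since you spell out both the upper bound via the chain decomposition and the lower bound via the concrete antichain $\{[1],\ldots,[n]\}$; the paper leaves the existence of an $n$-element antichain implicit in the observation that distinct chains $C_i$ and $C_j$ are entirely incomparable.
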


  \subsection{Simple zigzag}\label{subseq:zigzag}

  Let $1\leq s\leq n$ and consider the orientation of the path of length $n$ with a unique source at position $s$. For $s=1$ or $s=n$ the case degenerates to the linear orientation (up to reordering the vertices) of Subsection~\ref{subsec:linear}. To simplify the notation, denote $\ell=s-1$ and $r=n-s$ so that the quiver is of the form as shown in Figure~\ref{fig:zigzag}.
  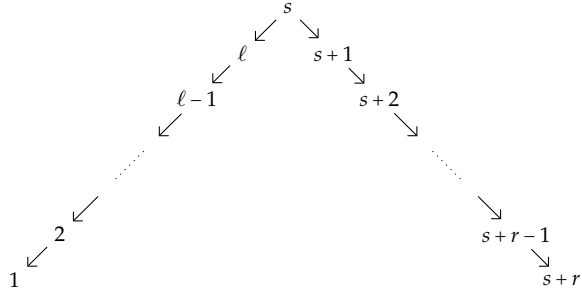
\begin{figure}[H]
  	\centering
  	\begin{tikzpicture}[scale=0.6, every node/.style={scale=0.8}]
      	\node (1) at (1,0) {1};
          \node (2) at (2,1) {2};
          \node (3) at (3,2) {};
          \node (l-2) at (4,3) {};
          \node (l-1) at (5,4) {$\ell-1$};
          \node (l) at (6,5) {$\ell$};
          \node (s) at (7,6) {$s$};
          \node (s+1) at (8,5) {$s+1$};
          \node (s+2) at (9,4) {$s+2$};
          \node (s+3) at (10,3) {};
          \node (s+r-2) at (11,2) {};
          \node (s+r-1) at (12,1) {$s+r-1$};
          \node (s+r) at (13,0) {$s+r$};
          \draw[-angle 90,relative] (2) to (1);
          \draw[-angle 90,relative] (3) to (2);
          \draw[dotted] (l-2) -- (3);
          \draw[-angle 90,relative] (l-1) to (l-2);
  		\draw[-angle 90,relative] (l) to (l-1);
          \draw[-angle 90,relative] (s) to (l);
          \draw[-angle 90,relative] (s) to (s+1);
  		\draw[-angle 90,relative] (s+1) to (s+2);
          \draw[-angle 90,relative] (s+2) to (s+3);
          \draw[dotted] (s+3) -- (s+r-2);
          \draw[-angle 90,relative] (s+r-2) to (s+r-1);
  		\draw[-angle 90,relative] (s+r-1) to (s+r);
      \end{tikzpicture}
  	\caption{Simple zigzag orientation of type $A_n$}\label{fig:zigzag}
  \end{figure}

  \begin{theorem}\label{thm:simplezigzag}
  	The set $\mathcal{F}=\Big\{ [a,b] \colon a \leq s \leq b \Big\}$ is a maximum antichain of size $(\ell+1)(r+1)$ in $(\mathcal{P}_Q,\leq)$.
  \end{theorem}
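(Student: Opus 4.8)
The plan is to describe the order relation on $\mathcal{P}_Q$ explicitly, use it to see that $\mathcal{F}$ is an antichain of size $(\ell+1)(r+1)$, and then exhibit a decomposition of $\mathcal{P}_Q$ into exactly $(\ell+1)(r+1)$ chains; since every antichain meets each chain of a chain decomposition in at most one point, this forces $\mathcal{F}$ to be maximum.

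First I would determine when $[a,b]\le[a',b']$ for this orientation. A morphism $\phi\colon[a,b]\to[a',b']$ consists of scalars $\phi_i$ that vanish outside $[a,b]\cap[a',b']$ and, since the structure maps of both representations restrict to identities on that intersection, are constant there; for $\phi$ to be injective this constant must be nonzero and $[a,b]$ must be contained in $[a',b']$ as an interval, i.e. $a'\le a\le b\le b'$. The extra constraints come from the two endpoints. Because every arrow points away from the source $s$, the edge incident to $a$ is oriented $a\to a-1$ precisely when $a\le s$, and in that case the commutation square at this edge forces $a'=a$ (otherwise the constant would have to vanish); symmetrically the edge incident to $b$ is oriented $b\to b+1$ precisely when $b\ge s$, forcing $b'=b$ in that case. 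Conversely, when $a'\le a\le b\le b'$ and these endpoint conditions hold, the evident inclusion is a monomorphism. Thus $[a,b]\le[a',b']$ if and only if $a'\le a\le b\le b'$, together with the implications ``$a\le s\Rightarrow a'=a$'' and ``$b\ge s\Rightarrow b'=b$''. In particular every interval $[a,b]$ with $a\le s\le b$ is comparable only to itself, so $\mathcal{F}$ is an antichain, and counting the admissible $a\in\{1,\dots,s\}$ and $b\in\{s,\dots,n\}$ gives $\lvert\mathcal{F}\rvert=s\,(n-s+1)=(\ell+1)(r+1)$.

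Next I would build the chain decomposition, splitting $\mathcal{P}_Q$ into \emph{left} intervals ($b\le s-1$), \emph{right} intervals ($a\ge s+1$) and \emph{straddling} intervals ($a\le s\le b$). For each $1\le a\le s-1$ the sequence $\bigl([a,a]\le[a,a+1]\le\cdots\le[a,s-1]\le[a,s]\bigr)$ is a chain by the comparability description, and these chains are pairwise disjoint and together exhaust all left intervals along with the straddling intervals of the form $[a,s]$ with $a<s$. Symmetrically, for each $s+1\le b\le n$ the sequence $\bigl([b,b]\le[b-1,b]\le\cdots\le[s+1,b]\le[s,b]\bigr)$ is a chain, and these exhaust all right intervals along with the straddling intervals $[s,b]$ with $b>s$. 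What remains uncovered is exactly the set of straddling intervals $[a,b]$ with $a<s<b$ together with the isolated point $[s,s]$; I would take each of these as a one-element chain. Counting gives $(s-1)+(n-s)+(s-1)(n-s)+1=(\ell+1)(r+1)$ chains in all, and a short check confirms that they are pairwise disjoint and cover $\mathcal{P}_Q$.

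Finally, since an antichain contains at most one element from each chain in a chain decomposition, every antichain of $\mathcal{P}_Q$ has at most $(\ell+1)(r+1)$ elements, so $\mathcal{F}$ is maximum. I expect the first step to be the crux: one has to notice that interval containment alone does \emph{not} guarantee a monomorphism, and to extract the exact endpoint conditions imposed by the orientation. Once the comparability relation is in hand, the construction of the chains and the bookkeeping are routine; the degenerate cases $s=1$ and $s=n$ (where $\ell=0$ or $r=0$) are subsumed by the same formulas, the corresponding family of chains simply being empty.
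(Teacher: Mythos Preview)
Your proof is correct and follows essentially the paper's route: the same chain decomposition into the left chains $([a]\le[a,a+1]\le\cdots\le[a,s])$ for $1\le a\le s-1$, the right chains $([b]\le[b-1,b]\le\cdots\le[s,b])$ for $s+1\le b\le n$, the singletons $\{[a,b]\}$ for $a<s<b$, and $\{[s]\}$, with each chain meeting $\mathcal{F}$ in its top element. The only slip is the sentence ``every interval $[a,b]$ with $a\le s\le b$ is comparable only to itself'': this is false (for instance $[s+1,b]<[s,b]$ by your own criterion, and your right chains rely on precisely such comparabilities), but what you actually need and what your endpoint conditions immediately give is the weaker fact that every element of $\mathcal{F}$ is \emph{maximal} in $\mathcal{P}_Q$, so that distinct elements of $\mathcal{F}$ are incomparable.
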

  \begin{proof}
  Let $[a,b]$ and $[a',b']$ be distinct elements from the set $\mathcal{F}$ and suppose there exists a monomorphism $\phi\colon [a,b] \hookrightarrow [a',b']$. This implies $a' \leq a$ and $b\leq b'$. Since these elements are distinct, $a'<a$ or $b<b'$. Without loss of generality, we may constrict to $a' < a$. Then the linear map corresponding to $\alpha\colon a\to (a-1)$ is zero in $[a,b]$ whereas in $[a',b']$ it is the identity. But $\id_k \circ \phi_{a} \neq \phi_{a-1} \circ 0$ yields a contradiction. Hence $\mathcal{F}$ is an antichain.

  To show that $\mathcal{F}$ is a maximum antichain, we describe a particular chain decomposition of $(\mathcal{P}_Q,\leq)$ and observe that $\mathcal{F}$ contains precisely one element of each chain.

  For $1\leq i \leq \ell$ and $1\leq j \leq r$ let
  \begin{align*}
  	L_i &= \Big( [i] \leq [i,i+1] \leq \ldots \leq [i,s] \Big),\\
      R_j &= \Big( [j] \leq [j-1,j] \leq \ldots \leq [s,j] \Big).
  \end{align*}
  We then obtain a chain decomposition of the entire poset $\mathcal{P}$ as follows:
  \begin{equation}\label{eq:chaindecomp}
  	\mathcal{P} = \left(\bigsqcup_{1\leq i \leq \ell} L_i \right) \sqcup \left(\bigsqcup_{1\leq j \leq r} R_j \right) \sqcup \left( \bigsqcup_{a < s < b} \Big\{[a,b]\Big\} \right) \sqcup \Big\{ [s] \Big\}.
  \end{equation}
  Every element of $\mathcal{F}$ lies in exactly one chain in~\eqref{eq:chaindecomp} as the maximal element. The number of elements in an antichain is thus bounded above by the number of chains in the decomposition in~\eqref{eq:chaindecomp}, which coincides with the cardinality of $\mathcal{F}$ by construction. This implies that the decomposition in~\eqref{eq:chaindecomp} is a Dilworth decomposition and that $\mathcal{F}$ is a maximum antichain.
  \end{proof}

  \begin{ex}\label{ex:A6_-1-1111}
      Let $n=6$ and $s=3$, hence $\ell = 2$ and $r=3$. The Hasse diagram drawn horizontally and $[a,b] \leq [a',b']$ indicated by arrows $[a,b] \to [a',b']$ is shown in Figure~\ref{fig:A6_-1-1111}. The elements of the set $\mathcal{F}$ are highlighted.
      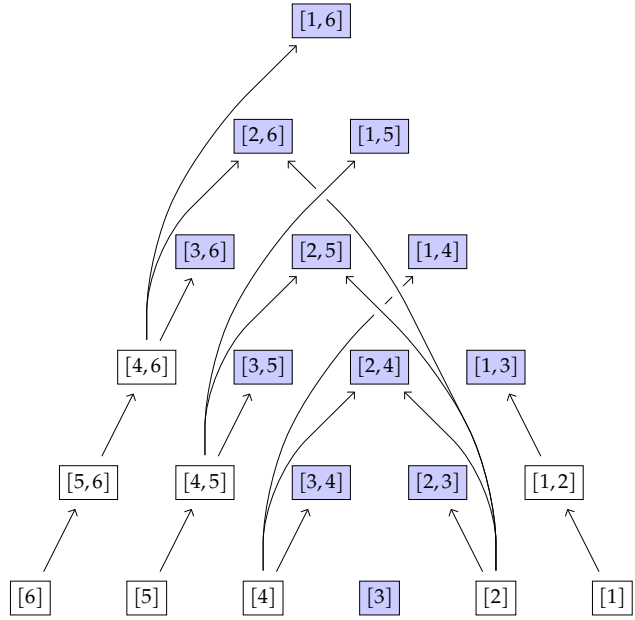
\begin{figure}
              \centering
              \resizebox{.5\linewidth}{!}{
              \begin{tikzpicture}

                  \node[draw] (v6) at (0,0)    { $[6]$ };
                  \node[draw] (v5) at (2,0)    { $[5]$ };
                  \node[draw] (v4) at (4,0)    { $[4]$ };
                  \node[draw,fill=blue!20] (v3) at (6,0)    { $[3]$ };
                  \node[draw] (v2) at (8,0)    { $[2]$ };
                  \node[draw] (v1) at (10,0)   { $[1]$ };
                  \node[draw] (v5-6) at (1,2)  { $[5,6]$ };
                  \node[draw] (v4-5) at (3,2)  { $[4,5]$ };
                  \node[draw,fill=blue!20] (v3-4) at (5,2)  { $[3,4]$ };
                  \node[draw,fill=blue!20] (v2-3) at (7,2)  { $[2,3]$ };
                  \node[draw] (v1-2) at (9,2)  { $[1,2]$ };
                  \node[draw] (v4-6) at (2,4)  { $[4,6]$ };
                  \node[draw,fill=blue!20] (v3-5) at (4,4)  { $[3,5]$ };
                  \node[draw,fill=blue!20] (v2-4) at (6,4)  { $[2,4]$ };
                  \node[draw,fill=blue!20] (v1-3) at (8,4)  { $[1,3]$ };
                  \node[draw,fill=blue!20] (v3-6) at (3,6)  { $[3,6]$ };
                  \node[draw,fill=blue!20] (v2-5) at (5,6)  { $[2,5]$ };
                  \node[draw,fill=blue!20] (v1-4) at (7,6)  { $[1,4]$ };
                  \node[draw,fill=blue!20] (v2-6) at (4,8) { $[2,6]$ };
                  \node[draw,fill=blue!20] (v1-5) at (6,8) { $[1,5]$ };
                  \node[draw,fill=blue!20] (v1-6) at (5,10) { $[1,6]$ };

                  \draw[-angle 90,relative,shorten >=5pt,shorten <=5pt] (v6) to (v5-6);
                  \draw[-angle 90,relative,shorten >=5pt,shorten <=5pt] (v5) to (v4-5);
                  \draw[-angle 90,relative,shorten >=5pt,shorten <=5pt] (v4) to (v3-4);
                  \draw[-angle 90,relative,rounded corners=30pt,shorten >=5pt,shorten <=5pt] (v4) -- (4,2) -- (v2-4);
                  \draw[-angle 90,relative,rounded corners=30pt,shorten >=5pt,shorten <=5pt] (v4) -- (4,2) -- (5,4) -- (v1-4);
                  \draw[fill=white,draw=white] (6,5) circle [radius=0.125];
                  \draw[fill=white,draw=white] (6.325,5.3) circle [radius=0.125];
                  \draw[-angle 90,relative,shorten >=5pt,shorten <=5pt] (v2) to (v2-3);
                  \draw[-angle 90,relative,rounded corners=30pt,shorten >=5pt,shorten <=5pt] (v2) -- (8,2) -- (v2-4);
                  \draw[-angle 90,relative,rounded corners=30pt,shorten >=5pt,shorten <=5pt] (v2) -- (8,2) -- (7,4) -- (v2-5);
                  \draw[-angle 90,relative,rounded corners=30pt,shorten >=5pt,shorten <=5pt] (v2) -- (8,2) -- (7,4) -- (6,6) -- (v2-6);
                  \draw[fill=white,draw=white] (5,7) circle [radius=0.125];
                  \draw[-angle 90,relative,shorten >=5pt,shorten <=5pt] (v1) to (v1-2);
  				        \draw[-angle 90,relative,shorten >=5pt,shorten <=5pt] (v5-6) to (v4-6);
                  \draw[-angle 90,relative,shorten >=5pt,shorten <=5pt] (v4-5) to (v3-5);
                  \draw[-angle 90,relative,rounded corners=30pt,shorten >=5pt,shorten <=5pt] (v4-5) -- (3,4) -- (v2-5);
                  \draw[-angle 90,relative,rounded corners=30pt,shorten >=5pt,shorten <=5pt] (v4-5) -- (3,4) -- (4,6) -- (v1-5);
                  \draw[-angle 90,relative,shorten >=5pt,shorten <=5pt] (v1-2) to (v1-3);
                  \draw[-angle 90,relative,shorten >=5pt,shorten <=5pt] (v4-6) to (v3-6);
                  \draw[-angle 90,relative,rounded corners=30pt,shorten >=5pt,shorten <=5pt] (v4-6) -- (2,6) -- (v2-6);
                  \draw[-angle 90,relative,rounded corners=30pt,shorten >=5pt,shorten <=5pt] (v4-6) -- (2,6) -- (3,8) -- (v1-6);

              \end{tikzpicture}}
          \caption{Monomorphism poset of a quiver of type $A_6$}
          \label{fig:A6_-1-1111}
      \end{figure}
  \end{ex}

  \subsection{Alternating orientation}\label{subseq:alternating}

  For this subsection, let $m\geq 1$ be a natural number and $n=2m+1$. Then we consider the quiver $Q$ which arises from the alternating orientation of the path of length $n$ starting and ending with a sink, see Figure~\ref{fig:alt_5}.

  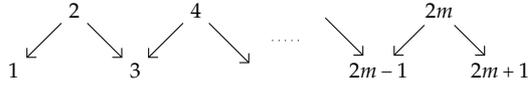
\begin{figure}
  	\centering
  	\begin{tikzpicture}[scale=0.8, every node/.style={scale=0.8}]
      	\node (1) at (1,0) {1};
          \node (2) at (2,1) {2};
          \node (3) at (3,0) {$3$};
          \node (4) at (4,1) {$4$};
          \node (5) at (5,0) {};
          \node (2s-2) at (6,1) {};
          \node (2s-1) at (7,0) {$2m-1$};
          \node (2s) at (8,1) {$2m$};
          \node (2s+1) at (9,0) {$2m+1$};
          \draw[-angle 90,relative] (2) to (1);
          \draw[-angle 90,relative] (2) to (3);
          \draw[-angle 90,relative] (4) to (3);
          \draw[-angle 90,relative] (4) to (5);
          \draw[-angle 90,relative] (2s-2) to (2s-1);
  		\draw[-angle 90,relative] (2s) to (2s-1);
          \draw[-angle 90,relative] (2s) to (2s+1);
          \draw[dotted] (5.25,0.5) -- (5.75,0.5);
      \end{tikzpicture}
  	\caption{Alternating orientation of path of length $n$ with $m$ sources}\label{fig:alt_5}
  \end{figure}

  \begin{theorem}\label{thm:alternating}
  Let $\mathcal{F} = \mathcal{F}_{\text{src}} \sqcup \mathcal{F}_{1-\text{src}} \sqcup \mathcal{F}_{\text{src}-n} \sqcup \mathcal{F}_{1-n}$ be the union of the following four sets
      \begin{align*}
      	\mathcal{F}_{\text{src}} &=\big\{ [a,b] \colon a\leq b \text{ sources}\big\}, & \mathcal{F}_{\text{src}-n}&= \big\{ [a,n] \colon a \text{ source}\big\},\\
          \mathcal{F}_{1-\text{src}} &= \big\{ [1,b] \colon b \text{ source}\big\},& \mathcal{F}_{1-n} &= \big\{ [1,n] \big\}.
      \end{align*}
      Then $\mathcal{F}$ is a maximum antichain of cardinality $\frac{1}{2}m(m+1) + 2m + 1$.
  \end{theorem}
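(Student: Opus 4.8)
The plan is to follow the structure of the proof of Theorem~\ref{thm:simplezigzag}: first pin down the order relation of $(\mathcal{P}_Q,\le)$ for the alternating orientation, then verify that $\mathcal{F}$ is an antichain, and finally exhibit a chain decomposition of $\mathcal{P}_Q$ into exactly $\lvert\mathcal{F}\rvert$ chains; by Dilworth's theorem this forces $\mathcal{F}$ to be a maximum antichain.

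First I would determine when $[a,b]\le[a',b']$, i.e. when a monomorphism $\phi\colon[a,b]\hookrightarrow[a',b']$ exists. As in the simple zigzag case, $\phi$ forces $a'\le a\le b\le b'$, and commutativity of $\phi$ at the arrow incident to the left endpoint $a$ shows that $a'<a$ requires that arrow to point \emph{into} $[a,b]$, i.e. to have the form $(a-1)\to a$; the dual condition governs $b'>b$ at the right endpoint. In the alternating orientation the sources are exactly the even vertices, so $\{a-1,a\}$ is oriented $(a-1)\to a$ precisely when $a-1$ is a source, that is, when $a$ is odd. Thus the criterion becomes: $[a,b]\le[a',b']$ iff $a'\le a\le b\le b'$, and $a'=a$ unless $a$ is odd with $a\ge3$, and $b'=b$ unless $b$ is odd with $b\le 2m-1$. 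Now every element of $\mathcal{F}$ has left endpoint in $\{1\}\cup\{\text{sources}\}$ and right endpoint in $\{\text{sources}\}\cup\{n\}$, so for such an element neither endpoint can be moved; hence $\mathcal{F}$ is exactly the set of maximal elements of $(\mathcal{P}_Q,\le)$, and in particular an antichain (a strict relation $[a,b]<[a',b']$ forces $a'<a$ or $b<b'$, hence requires a movable endpoint, which no element of $\mathcal{F}$ has).

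To prove maximality I would produce the explicit chain decomposition $\mathcal{P}_Q=\bigsqcup_{F\in\mathcal{F}}C_F$, where $C_F$ has $F$ as its maximal element, as follows (with $n=2m+1$): for $1\le i<j\le m$, $C_{[2i,2j]}=\bigl([2i+1,2j-1]<[2i+1,2j]<[2i,2j]\bigr)$; for $1\le i\le m$, $C_{[2i,2i]}=\bigl([2i,2i]\bigr)$; for $1\le k\le m$, $C_{[1,2k]}=\bigl([1,2k-1]<[1,2k]\bigr)$; for $1\le i\le m$, $C_{[2i,n]}$ is the chain consisting of $[2i,n]$ together with all $[2i,b]$ for $b$ odd with $2i<b<n$ (so $C_{[2m,n]}=\bigl([2m,n]\bigr)$); and $C_{[1,n]}$ consists of all $[a,n]$ with $a$ odd. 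Using the order criterion one checks that each family is a chain, that the prescribed maximal elements run exactly once over $\mathcal{F}=\mathcal{F}_{\text{src}}\sqcup\mathcal{F}_{1-\text{src}}\sqcup\mathcal{F}_{\text{src}-n}\sqcup\mathcal{F}_{1-n}$, and — by a case distinction on the parities of $a$ and $b$ and on whether $b=n$ — that every $[a,b]$ with $1\le a\le b\le n$ occurs in exactly one chain; the count $\sum_{F}\lvert C_F\rvert=(2m+1)(m+1)=\lvert\mathcal{P}_Q\rvert$ confirms this is a partition. Since the decomposition has $\lvert\mathcal{F}\rvert$ chains, every antichain has at most $\lvert\mathcal{F}\rvert$ elements, so the antichain $\mathcal{F}$ is maximum (and the decomposition is a Dilworth decomposition). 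Finally $\lvert\mathcal{F}\rvert=\tfrac12 m(m+1)+2m+1$ because $\lvert\mathcal{F}_{\text{src}}\rvert=\binom{m+1}{2}$, $\lvert\mathcal{F}_{1-\text{src}}\rvert=\lvert\mathcal{F}_{\text{src}-n}\rvert=m$ and $\lvert\mathcal{F}_{1-n}\rvert=1$.

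The main obstacle is getting the chain decomposition right. Unlike the simple zigzag, where $(\mathcal{P}_Q,\le)$ splits into the transparent chains $L_i,R_j$ and a few singletons, here the mobility of the odd (sink) endpoints produces long covering relations — for instance $[1,b]$ covers $[3,b]$ directly — and genuine diamonds such as $\{[3,3],[2,3],[3,4],[2,4]\}$, so any decomposition that keeps each ``column'' $\{[a,b]:b\text{ fixed}\}$ intact needs strictly more than $\lvert\mathcal{F}\rvert$ chains. The purpose of the families $C_{[2i,n]}$ and $C_{[1,n]}$ is precisely to let the elements having a mobile endpoint — the odd slices of column $n$, and the $[2i,b]$ with $b$ an interior odd vertex — migrate across several columns and be absorbed into chains topped by $[2i,n]$ and $[1,n]$; checking that this migration yields a genuine partition is the one delicate point, handled by the parity case analysis.
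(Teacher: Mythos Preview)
Your proof is correct; both you and the paper verify that $\mathcal{F}$ is an antichain and then produce a Dilworth decomposition into $\lvert\mathcal{F}\rvert$ chains, but the two decompositions are genuinely different. For a pair of sources $2i<2j$ the paper uses the $3$-chain $[2i{+}1,2j{-}1]\le[2i,2j{-}1]\le[2i,2j]$ (extend the left endpoint first), while you use $[2i{+}1,2j{-}1]<[2i{+}1,2j]<[2i,2j]$ (extend the right endpoint first). This forces the remaining mixed-parity intervals to be absorbed differently: the paper collects all $[\text{odd},2k]$ into long chains $C_{[1,2k]}=\bigl([2k{-}1,2k]\le[2k{-}3,2k]\le\cdots\le[1,2k]\bigr)$, leaves each $[a,n]$ with $a$ a source $\neq 2$ as a singleton, and adds two long boundary chains $C_{[1,n]}=\bigl([1]\le[1,3]\le\cdots\le[1,n]\bigr)$ and $C_{[2,n]}=\bigl([n]\le[n{-}2,n]\le\cdots\le[3,n]\le[2,n]\bigr)$; you instead collect all $[2i,\text{odd}]$ into long chains $C_{[2i,n]}$ and all $[\text{odd},n]$ into $C_{[1,n]}$, with your $C_{[1,2k]}$ shrinking to two elements. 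The two constructions are essentially reflections of one another under the left--right symmetry of the alternating quiver, and neither is simpler than the other. Your extra observation that $\mathcal{F}$ coincides with the set of maximal elements of $(\mathcal{P}_Q,\le)$ is a cleaner justification of the antichain property than the paper's appeal to the simple-zigzag argument.
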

  \begin{proof}
  	Using the same arguments as in the proof of Theorem~\ref{thm:simplezigzag}, we see that $\mathcal{F}$ is an antichain. And as before, we provide a chain decomposition of the entire poset to show that it is also maximum.

      Let $a$ and $b$ be sources. Then consider the chains in Table~\ref{tab:chains} depending on a choice of one or two sources.
      \begin{table}[H]
          \begin{center}
              \begin{tabular}{ lccc }
              \toprule\addlinespace[1ex]
              \multicolumn{2}{c}{Chain} \\ \cmidrule(r){1-2}\addlinespace[1ex]
              Name & Description & Condition & Cardinality \\ \midrule[1.5pt]\addlinespace[1ex]
              $C_{[a]}$ & $\Big( [a] \Big)$ & & 1\\ \addlinespace[1ex]
              $C_{[a,n]}$ & $\Big( [a,n] \Big)$ & $a\neq 2$ & 1 \\ \addlinespace[1ex]
              $C_{[1,a]}$ & $\Big( [a-1,a] \leq [a-3,a] \leq \ldots \leq [3,a] \leq [1,a] \Big)$ & & $a/2$\\ \addlinespace[1ex]
              $C_{[a,b]}$ & $\Big( [a+1,b-1] \leq [a,b-1] \leq [a,b] \Big)$ & $a < b$ & 3\\\addlinespace[1ex]
              \bottomrule
              \end{tabular}
          \end{center}
          \caption{Chains depending on a choice of one or two sources}\label{tab:chains}
  	\end{table}
  	\noindent
      Furthermore, we also consider two chains $C_{[1,n]}$ and $C_{[2,n]}$ not depending on a choice of sources, both of cardinality $m+1$:
      \begin{align*}
      	C_{[1,n]} &= \Big( [1] \leq [1,3] \leq [1,5] \leq \ldots \leq [1,n] \Big), \\
          C_{[2,n]} & = \Big( [n] \leq [n-2,n] \leq [n-4,n] \leq \ldots \leq [5,n] \leq [3,n] \leq [2,n] \Big).
      \end{align*}
      By construction, all of the chains above are pairwise disjoint. Altogether these chains exhaust all elements of the poset since
      \[
      	\frac{n(n+1)}{2} = \frac{(2m+1)(2m+2)}{2} = 2m^2+3m+1 = m\cdot 1 + (m-1) \cdot 1 + \frac{m(m+1)}{2} + \binom{m}{2} \cdot 3 + 2 \cdot (m+1).
      \]
      Thus they form a chain decomposition of $(\mathcal{P}_Q,\leq)$ and every element of $\mathcal{F}$ lies in exactly one chain as the maximal element.
  \end{proof}

  We can apply this construction to a particular subquiver of $Q$. Let $Q'$ be the full subquiver of $Q$ with the vertex $n$ deleted, i.\,e. the alternating orientation of the path of length $n-1$ starting with a sink and ending with a source. Then we notate $n'=n-1$ and observe that $Q'$ has indeed $n'=2m$ vertices and $s$ sources.

  \begin{cor}
  	Let $\mathcal{F}'_{\text{src}} = \mathcal{F}_{\text{src}}\setminus \big\{[2,n']\big\}$ and $\mathcal{F}'_{1-\text{src}} = \mathcal{F}_{1-\text{src}} \setminus \big\{ [1,n'] \big\}$. Then
      \begin{equation*}
      	\mathcal{F}' = \mathcal{F}'_{\text{src}} \sqcup \mathcal{F}'_{1-\text{src}} \sqcup \big\{ [1,n'-1] \big\} \sqcup \big\{ [2,n'-1] \big\} \sqcup \big\{ [3,n'] \big\}
      \end{equation*}\
      is a maximum antichain of $(\mathcal{P}_{Q'},\leq)$ of cardinality $\frac{1}{2}(m+1)(m+2)$.
  \end{cor}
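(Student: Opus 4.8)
The plan is to follow the blueprint of the proof of Theorem~\ref{thm:alternating}: show that $\mathcal{F}'$ is an antichain, produce a chain decomposition of $(\mathcal{P}_{Q'},\leq)$ into exactly $\tfrac12(m+1)(m+2)$ chains each meeting $\mathcal{F}'$ in a single element, and conclude with Dilworth's theorem. Combining the observation $[a,b]\leq[a',b']\iff[a,b]\subseteq[a',b']$ with the arrow-direction analysis in the proof of Theorem~\ref{thm:simplezigzag}, one obtains the following description of the order for this alternating orientation, in which the odd vertices are the sinks and the even ones the sources: $[a,b]\leq[a',b']$ if and only if $a'\leq a\leq b\leq b'$, and in addition $a=a'$ or $a$ is a sink, and $b=b'$ or $b$ is a sink. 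I will also use that $\mathcal{P}_{Q'}$ is the full subposet of $\mathcal{P}_Q$ on the intervals $[a,b]$ with $b\leq n'$: a representation of $Q'$ is a representation of $Q$ whose space at the deleted vertex $n$ is zero, and the only arrow of $Q$ absent from $Q'$ meets $n$, so it imposes no condition on morphisms between such representations.

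\emph{$\mathcal{F}'$ is an antichain.} Since $\mathcal{F}'_{\text{src}}\sqcup\mathcal{F}'_{1-\text{src}}$ is contained in the antichain $\mathcal{F}$ of Theorem~\ref{thm:alternating} and $\mathcal{P}_{Q'}$ is a full subposet of $\mathcal{P}_Q$, its elements are pairwise incomparable in $\mathcal{P}_{Q'}$. It remains to handle the three extra intervals $[1,n'-1]$, $[2,n'-1]$, $[3,n']$. The order criterion shows that in $\mathcal{P}_{Q'}$ each of these three lies strictly below only intervals from $\{[1,n'],[2,n']\}$, and strictly above only intervals that one checks at once to be outside $\mathcal{F}'$ (for $[1,n'-1]$ these are exactly the intervals with both endpoints sinks). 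As $[1,n']$ and $[2,n']$ are precisely the two intervals deleted from $\mathcal{F}$ in forming $\mathcal{F}'$, none of the three is comparable with any other element of $\mathcal{F}'$, so $\mathcal{F}'$ is an antichain.

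\emph{Chain decomposition.} I would take the chain decomposition of $\mathcal{P}_Q$ constructed in the proof of Theorem~\ref{thm:alternating} and discard every interval containing the vertex $n$. Since $\mathcal{P}_Q\setminus\mathcal{P}_{Q'}=\{[a,n]\colon 1\leq a\leq n\}$, the result is a chain decomposition of $\mathcal{P}_{Q'}$. Tracing the effect: the singleton chains $C_{[a,n]}$ with $a$ a source $\neq 2$, and the chain $C_{[2,n]}$, consist entirely of intervals through $n$ and disappear; the chain $C_{[1,n]}$ loses only its top element $[1,n]$ and shrinks to $\big([1]\leq[1,3]\leq\cdots\leq[1,n'-1]\big)$; all other chains — the $C_{[a]}$ and the $C_{[1,a]}$ for $a$ a source, together with the $C_{[a,b]}$ for sources $a<b$ — remain intact. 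Counting the survivors yields $m+m+\binom{m}{2}+1=\tfrac12(m+1)(m+2)$, which equals $|\mathcal{F}'|$.

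\emph{Matching and conclusion.} Finally I would verify that each surviving chain meets $\mathcal{F}'$ in exactly one element: $C_{[a]}$ in $[a,a]\in\mathcal{F}'_{\text{src}}$; $C_{[a,b]}$ with $(a,b)\neq(2,n')$ in its top $[a,b]\in\mathcal{F}'_{\text{src}}$, whereas $C_{[2,n']}=\big([3,n'-1]\leq[2,n'-1]\leq[2,n']\big)$ meets $\mathcal{F}'$ only in $[2,n'-1]$; $C_{[1,a]}$ with $a$ a source $\neq n'$ in its top $[1,a]\in\mathcal{F}'_{1-\text{src}}$, whereas $C_{[1,n']}=\big([n'-1,n']\leq\cdots\leq[3,n']\leq[1,n']\big)$ meets $\mathcal{F}'$ only in $[3,n']$; and the truncated chain $\big([1]\leq[1,3]\leq\cdots\leq[1,n'-1]\big)$ meets $\mathcal{F}'$ only in $[1,n'-1]$. (In each case the other members of the chain are seen to lie outside $\mathcal{F}'$ by comparison with the four blocks defining $\mathcal{F}'$.) As these chains partition $\mathcal{P}_{Q'}$ and there are $|\mathcal{F}'|$ of them, every antichain of $\mathcal{P}_{Q'}$ has at most $|\mathcal{F}'|$ elements, so the decomposition is a Dilworth decomposition and $\mathcal{F}'$ is a maximum antichain of the asserted size $\tfrac12(m+1)(m+2)$. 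The step I expect to demand the most care is this last bit of bookkeeping — in particular the slightly counterintuitive point that the chains $C_{[1,n']}$ and $C_{[2,n']}$, whose top elements $[1,n']$ and $[2,n']$ are \emph{not} in $\mathcal{F}'$, still each contain exactly one element of $\mathcal{F}'$ (namely $[3,n']$ and $[2,n'-1]$), which is what makes the chain count come out equal to $|\mathcal{F}'|$.
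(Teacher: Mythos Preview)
Your proposal is correct and follows exactly the approach of the paper: the paper's proof consists of the single sentence that the Dilworth decomposition from Theorem~\ref{thm:alternating} ``degenerates to a Dilworth decomposition of $\mathcal{P}_{Q'}$ if one removes all those elements supported at vertex $n$,'' and you carry this out explicitly---tracing which chains disappear, which shrink, counting the survivors, and checking that each meets $\mathcal{F}'$ in exactly one element. The extra verification you supply (that $\mathcal{F}'$ is an antichain and that the three special intervals $[1,n'-1]$, $[2,n'-1]$, $[3,n']$ sit in the surviving chains in place of the discarded tops $[1,n']$, $[2,n']$) is precisely the bookkeeping the paper leaves to the reader.
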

  \begin{proof}
  	The Dilworth decomposition in the proof of Theorem~\ref{thm:alternating} degenerates to a Dilworth decomposition of $\mathcal{P}_{Q'}$ if one removes all those elements supported at vertex $n$.
  \end{proof}

  \begin{ex}

  	Let us consider the case for $m=3$, hence $n=7$ and $n'=6$. The Hasse diagrams of the posets $(\mathcal{P}_Q, \leq)$ and $(\mathcal{P}_{Q'}, \leq)$ are shown in Figure~\ref{fig:alt_Hasse_67}. Those nodes contained in $\mathcal{P}_Q$ but not in $\mathcal{P}_{Q'}$ are shaded gray above the downward diagonal. The elements in $\mathcal{F}$  are highlighted in blue below the downward diagonal and those of $\mathcal{F}'$ in red above the downward diagonal.

  	\begin{figure}
  		\centering
          \resizebox{.6\linewidth}{!}{
          \begin{tikzpicture}
          		\node[draw] (1) at (1,0) {$[1]$};
              \node[draw] (3) at (4,0) {$[3]$};
              \node[draw] (5) at (7,0) {$[5]$};
              \node[rectangle with diagonal fill, diagonal top color=black!20, diagonal bottom color=white, diagonal from left to right, draw] (7) at (10,0) {$[7]$};
              \node[rectangle with diagonal fill, diagonal top color=red!50, diagonal bottom color=blue!30, diagonal from left to right, draw] (1-2) at (0,2) {$[1,2]$};
              \node[draw] (1-3) at (2.2,2) {$[1,3]$};
              \node[draw] (3-4) at (3.3,2) {$[3,4]$};
              \node[draw] (2-3) at (4.4,2) {$[2,3]$};
              \node[draw] (3-5) at (5.5,2) {$[3,5]$};
              \node[draw] (5-6) at (6.6,2) {$[5,6]$};
              \node[draw] (4-5) at (7.7,2) {$[4,5]$};
              \node[rectangle with diagonal fill, diagonal top color=black!20, diagonal bottom color=white, diagonal from left to right, draw] (5-7) at (8.8,2) {$[5,7]$};
              \node[rectangle with diagonal fill, diagonal top color=black!20, diagonal bottom color=blue!30, diagonal from left to right, draw] (6-7) at (11,2) {$[6,7]$};
              \node[rectangle with diagonal fill, diagonal top color=red!50, diagonal bottom color=blue!30, diagonal from left to right, draw] (1-4) at (1,4) {$[1,4]$};
              \node[rectangle with diagonal fill, diagonal top color=red!50, diagonal bottom color=blue!30, diagonal from left to right, draw] (2-4) at (2.1,4) {$[2,4]$};
              \node[rectangle with diagonal fill, diagonal top color=red!50, diagonal bottom color=white, diagonal from left to right, draw] (1-5) at (3.85,4) {$[1,5]$};
              \node[rectangle with diagonal fill, diagonal top color=red!50, diagonal bottom color=white, diagonal from left to right, draw] (2-5) at (4.95,4) {$[2,5]$};
              \node[rectangle with diagonal fill, diagonal top color=red!50, diagonal bottom color=white, diagonal from left to right, draw] (3-6) at (6.05,4) {$[3,6]$};
              \node[rectangle with diagonal fill, diagonal top color=black!20, diagonal bottom color=white, diagonal from left to right, draw] (3-7) at (7.15,4) {$[3,7]$};
              \node[rectangle with diagonal fill, diagonal top color=red!50, diagonal bottom color=blue!30, diagonal from left to right, draw] (4-6) at (8.9,4) {$[4,6]$};
              \node[rectangle with diagonal fill, diagonal top color=black!20, diagonal bottom color=blue!30, diagonal from left to right, draw] (4-7) at (10,4) {$[4,7]$};
              \node[rectangle with diagonal fill, diagonal top color=white, diagonal bottom color=blue!30, diagonal from left to right, draw] (1-6) at (3,6) {$[1,6]$};
              \node[rectangle with diagonal fill, diagonal top color=black!20, diagonal bottom color=blue!30, diagonal from left to right, draw] (1-7) at (5.5,8) {$[1,7]$};
              \node[rectangle with diagonal fill, diagonal top color=white, diagonal bottom color=blue!30, diagonal from left to right, draw]  (2-6) at (5.5,6) {$[2,6]$};
              \node[rectangle with diagonal fill, diagonal top color=black!20, diagonal bottom color=blue!30, diagonal from left to right, draw] (2-7) at (8,6) {$[2,7]$};
              \node[rectangle with diagonal fill, diagonal top color=red!50, diagonal bottom color=blue!30, diagonal from left to right, draw] (2) at (2.5,-1) {$[2]$};
              \node[rectangle with diagonal fill, diagonal top color=red!50, diagonal bottom color=blue!30, diagonal from left to right, draw] (4) at (5.5,-1) {$[4]$};
              \node[rectangle with diagonal fill, diagonal top color=red!50, diagonal bottom color=blue!30, diagonal from left to right, draw] (6) at (8.5,-1) {$[6]$};

              \draw[-angle 90,relative, shorten >=5pt, shorten <=5pt] (1) to (1-2);
              \draw[-angle 90,relative, shorten >=5pt, shorten <=5pt] (1) to (1-3);
              \draw[-angle 90,relative, shorten >=5pt, shorten <=5pt] (3) to (1-3);
              \draw[-angle 90,relative, shorten >=5pt, shorten <=5pt] (3) to (3-4);
              \draw[-angle 90,relative, shorten >=5pt, shorten <=5pt] (3) to (2-3);
              \draw[-angle 90,relative,shorten >=5pt,shorten <=5pt] (3) to (3-5);
              \draw[-angle 90,relative,shorten >=5pt,shorten <=5pt] (5) to (3-5);
              \draw[-angle 90,relative,shorten >=5pt,shorten <=5pt] (5) to (5-6);
              \draw[-angle 90,relative,shorten >=5pt,shorten <=5pt] (5) to (4-5);
              \draw[-angle 90,relative,shorten >=5pt,shorten <=5pt] (5) to (5-7);
              \draw[-angle 90,relative,shorten >=5pt,shorten <=5pt] (7) to (5-7);
              \draw[-angle 90,relative,shorten >=5pt,shorten <=5pt] (7) to (6-7);
              \draw[-angle 90,relative,shorten >=5pt,shorten <=5pt] (1-3) to (1-4);
              \draw[-angle 90,relative,shorten >=5pt,shorten <=5pt] (1-3) to (1-5);
              \draw[fill=white,draw=white] (2.66,2.57) circle [radius=0.075];
              \draw[fill=white,draw=white] (2.85,2.775) circle [radius=0.075];
              \draw[fill=white,draw=white] (3.125,3.1) circle [radius=0.075];
              \draw[-angle 90,relative,shorten >=5pt,shorten <=5pt] (3-4) to (1-4);
              \draw[-angle 90,relative,shorten >=5pt,shorten <=5pt] (3-4) to (2-4);
              \draw[-angle 90,relative,shorten >=5pt,shorten <=5pt] (2-3) to (2-4);
              \draw[-angle 90,relative,shorten >=5pt,shorten <=5pt] (2-3) to (2-5);
              \draw[fill=white,draw=white] (4.675,3) circle [radius=0.075];
              \draw[-angle 90,relative,shorten >=5pt,shorten <=5pt] (3-5) to (1-5);
              \draw[-angle 90,relative,shorten >=5pt,shorten <=5pt] (3-5) to (2-5);
              \draw[-angle 90,relative,shorten >=5pt,shorten <=5pt] (3-5) to (3-6);
              \draw[-angle 90,relative,shorten >=5pt,shorten <=5pt] (3-5) to (3-7);
              \draw[fill=white,draw=white] (6.325,3) circle [radius=0.075];
              \draw[-angle 90,relative,shorten >=5pt,shorten <=5pt] (5-6) to (3-6);
              \draw[-angle 90,relative,shorten >=5pt,shorten <=5pt] (5-6) to (4-6);
              \draw[fill=white,draw=white] (7.9,3.1) circle [radius=0.075];
              \draw[-angle 90,relative,shorten >=5pt,shorten <=5pt] (4-5) to (4-6);
              \draw[fill=white,draw=white] (8.175,2.75) circle [radius=0.075];
              \draw[-angle 90,relative,shorten >=5pt,shorten <=5pt] (4-5) to (4-7);
              \draw[fill=white,draw=white] (8.35,2.575) circle [radius=0.075];
              \draw[-angle 90,relative,shorten >=5pt,shorten <=5pt] (5-7) to (3-7);
              \draw[-angle 90,relative,shorten >=5pt,shorten <=5pt] (5-7) to (4-7);
              \draw[-angle 90,relative,shorten >=5pt,shorten <=5pt] (1-5) to (1-6);
              \draw[-angle 90,relative,shorten >=5pt,shorten <=5pt] (1-5) to (1-7);
              \draw[fill=white,draw=white] (4.35,5.25) circle [radius=0.075];
              \draw[-angle 90,relative,shorten >=5pt,shorten <=2pt] (2-5.north) to (2-6);
              \draw[fill=white,draw=white] (5.125,4.8) circle [radius=0.075];
              \draw[-angle 90,relative,shorten >=5pt,shorten <=5pt] (2-5.north) to (2-7);
              \draw[fill=white,draw=white] (5.5,4.625) circle [radius=0.075];
              \draw[fill=white,draw=white] (5.875,4.8) circle [radius=0.075];
              \draw[fill=white,draw=white] (6.65,5.25) circle [radius=0.075];
              \draw[-angle 90,relative,shorten >=5pt,shorten <=5pt] (3-6.north) to (1-6);
              \draw[-angle 90,relative,shorten >=5pt,shorten <=2pt] (3-6.north) to (2-6);
              \draw[-angle 90,relative,shorten >=5pt,shorten <=5pt] (3-7) to (1-7);
              \draw[-angle 90,relative,shorten >=5pt,shorten <=5pt] (3-7) to (2-7);

          \end{tikzpicture}}
          \caption{Hasse diagrams of alternating orientations of $A_6$ and $A_7$}\label{fig:alt_Hasse_67}
      \end{figure}
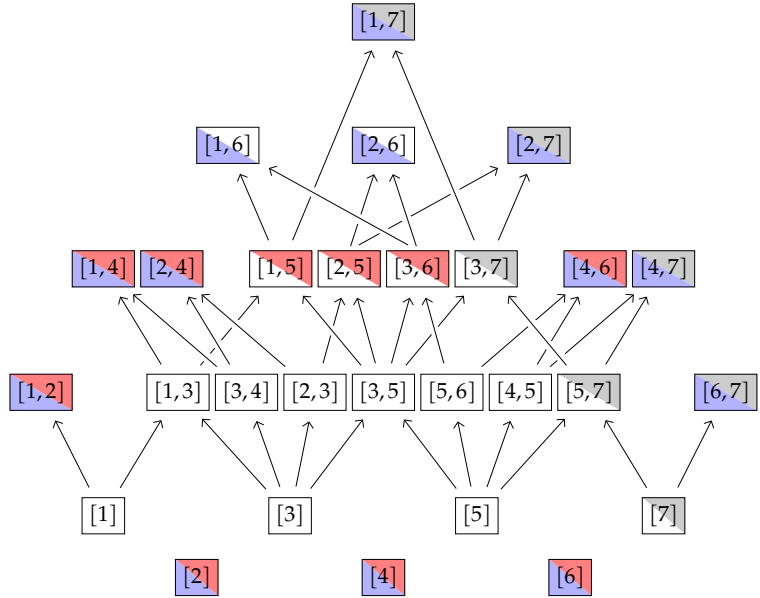
  \end{ex}

  \begin{rem}
  	For the only case where the alternating orientation of this section coincides with the simple zigzag of Subsection~\ref{subseq:zigzag}, the maximum antichains of Theorem~\ref{thm:simplezigzag} and Theorem~\ref{thm:alternating} are identical.
  \end{rem}

  \begin{rem}
  	The cases considered in this section should be extended to arbitrary orientations of quivers of type $A_n$, and even to the types $D_n$ and $E_{6,7,8}$. Computational experiments suggest that the combinatorics of general Dynkin cases are much more intricate than what we have previously seen.
  \end{rem}

  \section{Sperner theorems for subrepresentation posets in type \texorpdfstring{$A$}{A}}

  \subsection{A Sperner theorem for subrepresentations posets in type \texorpdfstring{$A_2$}{A2}}
  \label{sub:SpernerLinearSub}

  Let $V$ be a representation of a quiver $Q=(Q_0,Q_1)$ over a field $k$. We denote by $\mathcal{P}_V$ the set of all subrepresentations $U\subseteq V$. Assume that $U_1,U_2\in\mathcal{P}_V$. We say that $U_1\leq U_2$ if and only if $U_1$ is a subrepresentation of $U_2$. In this way, $(\mathcal{P}_V,\leq)$ becomes a partially ordered set.

  \begin{prop}
  \label{prop:grading}
  If every simple representation of $Q$ is $1$-dimensional, then the dimension function \linebreak$U\mapsto\operatorname{dim}_k(U)$ defines a grading of the subrepresentation poset $(\mathcal{P}_V,\leq)$.
  \end{prop}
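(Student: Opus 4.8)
The plan is to verify directly the two defining properties of a graded poset recalled in the introduction for the candidate degree map $\deg(U)=\dim_k(U)$: that $\deg$ vanishes on the minimal elements of $(\mathcal{P}_V,\leq)$, and that $\deg(U_1)=\deg(U_2)+1$ whenever $U_1$ covers $U_2$.

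First I would note that $\dim_k(U)=\sum_{i\in Q_0}\dim_k(U_i)$ is a well-defined non-negative integer for every subrepresentation $U\subseteq V$, so $\deg$ is genuinely a map $\mathcal{P}_V\to\mathbb{N}$. The zero representation is a subrepresentation of every element of $\mathcal{P}_V$, hence it is the unique minimal element, and $\dim_k(0)=0$; this settles the first condition. (Observe that I do not need $\mathcal{P}_V$ to be finite here: the definition of grading only asks for the existence of such a map.)

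Next, suppose $U_1$ covers $U_2$, so that $U_2\subsetneqq U_1$ and there is no subrepresentation $W$ with $U_2\subsetneqq W\subsetneqq U_1$. I would pass to the quotient representation $U_1/U_2$ defined earlier. The correspondence theorem for quiver representations — which follows vertexwise from the usual lattice-isomorphism theorem for vector spaces, once one checks that the assignment $W\mapsto W/U_2$ respects the arrow maps $(U_1/U_2)_\alpha$ — gives an order-preserving bijection between the subrepresentations $W$ with $U_2\subseteq W\subseteq U_1$ and the subrepresentations of $U_1/U_2$. The covering hypothesis thus says precisely that $U_1/U_2$ admits no subrepresentation strictly between $0$ and itself, i.e.\ that $U_1/U_2$ is a simple representation of $Q$.

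Finally I would invoke the hypothesis that every simple representation of $Q$ is $1$-dimensional, which yields $\dim_k(U_1/U_2)=1$. Since $(U_1/U_2)_i=(U_1)_i/(U_2)_i$ at each vertex and vector space dimension is additive, $\dim_k$ is additive along the short exact sequence $0\to U_2\to U_1\to U_1/U_2\to 0$, so $\dim_k(U_1)=\dim_k(U_2)+\dim_k(U_1/U_2)=\dim_k(U_2)+1$. This is exactly the covering condition, completing the proof. The only step that requires any care is the correspondence theorem in the quiver setting, but this is entirely routine.
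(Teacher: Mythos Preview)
Your proof is correct and follows essentially the same approach as the paper: identify the zero representation as the unique minimal element, then use the correspondence (third isomorphism) theorem to show that a covering relation forces the quotient to be simple, whence $1$-dimensional by hypothesis. The paper's version is terser and swaps the roles of $U_1$ and $U_2$, but the argument is identical.
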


  \begin{proof} The only minimal element in $\mathcal{P}_V$ is the zero representation which satisfies $\operatorname{dim}(0)=0$. Suppose that $U_2\in\mathcal{P}_V$ covers $U_1\in\mathcal{P}_V$, i.\,e. $U_1\leq U_2$ and there does not exist an element $U_3\in\mathcal{P}_V$ such that $U_1<U_3<U_2$. The third isomorphism theorem implies that the quotient representation $U_2/U_1$ is nonzero and that there does not exist a representation $0\subsetneqq U\subsetneqq U_2/U_1$. Thus $U_2/U_1$ is simple and the assumption implies $\operatorname{dim}_k(U_2)-\operatorname{dim}_k(U_1)=1$.
  \end{proof}

  Note that the condition of the previous proposition is satisfied if the quiver $Q$ does not contain oriented cycles which we assume from now on. Additionally, we assume that $k=\mathbb{F}_q$ is a finite field $\mathbb{F}_q$ with $q$ elements. For every natural number $n\in\mathbb{N}$ we define the \textit{Gaussian integer} as $[n]_q=(q^n-1)/(q-1)$. Note that $[n]_q=1+q+q^2+\ldots+q^{n-1}$ can be simplified to a polynomial in $q$ which specializes to $n$ when we plug in $q=1$. Moreover, $[n]_q$ is equal to the number of $1$-dimensional vector subspaces of $\mathbb{F}_q^n$. The polynomial $[n]_q!=[1]_q\cdot[2]_2\cdot\ldots\cdot[n]_q$ is called \textit{Gaussian factorial}. For a natural number $0\leq d\leq n$ the polynomial $\binom{n}{d}_q=[n]_q!/([d]_q!\cdot[n-d]_q!)$ is called \textit{Gaussian binomial coefficient}. Note that $\binom{n}{d}_q$ is equal to the number of $d$-dimensional vector subspaces of $\mathbb{F}_q^n$. Likewise the number of $(d+1)$-dimensional $k$-vector spaces $U$ such that $k^d\subseteq U\subseteq k^n$ is equal to $[n-d]_q$. Dually, the number of $(n-1)$-dimensional vector spaces $U$ such that $k^d\subseteq U\subseteq k^n$ is also equal to $[n-d]_q$.

  \begin{theorem}
  If $Q=(1\to 2)$ is the quiver of type $A_2$ and $V=P_1^a$ is a direct sum of copies of the indecomposable, projective representation $P_1$, then the subrepresentation poset $(\mathcal{P}_V,\leq)$ is Sperner.
  \end{theorem}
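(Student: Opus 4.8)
The plan is to verify the hypotheses of Stanley's theorem (Theorem~\ref{thm:Stanley}) with middle degree $k=a$, following the template of Stanley's proof that finite subspace lattices are Sperner. Since the unique arrow of $Q$ acts as the identity on $V=P_1^{a}$, a subrepresentation of $V$ is precisely a pair $(A,B)$ of subspaces of $E:=\mathbb{F}_q^{a}$ with $A\subseteq B$, and one subrepresentation is contained in another exactly when this holds in both coordinates; so I identify $(\mathcal{P}_V,\leq)$ with the poset of such nested pairs. It is finite, has minimum $(0,0)$ and maximum $(E,E)$, and by Proposition~\ref{prop:grading} (the simple representations $S_1,S_2$ of $Q$ are $1$-dimensional) it is graded by $(A,B)\mapsto\dim A+\dim B$, with rank $n=2a$. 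I note in passing that $(A,B)\mapsto(B^{\perp},A^{\perp})$, for a fixed nondegenerate bilinear form on $E$, is an order-reversing self-bijection exchanging degrees $i$ and $2a-i$; this is a useful consistency check but is not needed below.

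On $\mathbb{Q}\mathcal{P}_V$ I define $\mathcal{U}=\mathcal{U}^{A}+\mathcal{U}^{B}$ by
\[
\mathcal{U}^{A}(A,B)=\sum_{\substack{A\subseteq A'\subseteq B\\ \dim A'=\dim A+1}}(A',B),\qquad
\mathcal{U}^{B}(A,B)=\sum_{\substack{B\subseteq B'\subseteq E\\ \dim B'=\dim B+1}}(A,B'),
\]
so that $\mathcal{U}(A,B)$ is the sum of all elements of $\mathcal{P}_V$ that cover $(A,B)$. Let $\mathcal{D}=\mathcal{U}^{*}$ be the adjoint for the inner product in which $\mathcal{P}_V$ is an orthonormal basis. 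A direct computation shows $\mathcal{D}=\mathcal{D}^{A}+\mathcal{D}^{B}$, where $\mathcal{D}^{A}$ shrinks $A$ and $\mathcal{D}^{B}$ shrinks $B$ while keeping it above $A$; in particular $\mathcal{D}(A,B)$ is the sum of the elements covered by $(A,B)$. Thus $\mathcal{U}_i(x)\in\sum_{y\geq x}\mathbb{Q}y$ and $\mathcal{D}_i(x)\in\sum_{y<x}\mathbb{Q}y$, as Stanley's theorem requires.

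The key step is to prove
\[
(\mathcal{D}\mathcal{U}-\mathcal{U}\mathcal{D})(A,B)=\bigl(\,[a-\dim B]_q-[\dim A]_q\,\bigr)\,(A,B),
\]
i.e. $\mathcal{D}\mathcal{U}-\mathcal{U}\mathcal{D}$ is diagonal in the basis $\mathcal{P}_V$ with these eigenvalues. I would do this by splitting into the four summands $\mathcal{D}^{X}\mathcal{U}^{Y}-\mathcal{U}^{Y}\mathcal{D}^{X}$ with $X,Y\in\{A,B\}$. The two mixed terms ($X\neq Y$) vanish identically: after expanding, $\mathcal{D}^{A}\mathcal{U}^{B}$ and $\mathcal{U}^{B}\mathcal{D}^{A}$ are the same sum, and so are $\mathcal{D}^{B}\mathcal{U}^{A}$ and $\mathcal{U}^{A}\mathcal{D}^{B}$ — this is exactly where the constraints built into $\mathcal{U}^{A}$ and $\mathcal{D}^{B}$ (their images never leave $\mathcal{P}_V$) pay off. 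For $X=Y=A$ the operators $\mathcal{U}^{A},\mathcal{D}^{A}$ preserve each fibre $\{A'\colon A'\subseteq B\}$, on which they restrict to the standard raising and lowering operators of the subspace lattice of $B$, so the commutation relation Stanley uses in \cite{St} gives $\mathcal{D}^{A}\mathcal{U}^{A}-\mathcal{U}^{A}\mathcal{D}^{A}=\bigl([\dim B-\dim A]_q-[\dim A]_q\bigr)\operatorname{id}$. For $X=Y=B$ an analogous two-step path count — in which the off-diagonal contributions again cancel in pairs — yields $\mathcal{D}^{B}\mathcal{U}^{B}-\mathcal{U}^{B}\mathcal{D}^{B}=\bigl([a-\dim B]_q-[\dim B-\dim A]_q\bigr)\operatorname{id}$. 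Adding the four contributions, the term $[\dim B-\dim A]_q$ cancels, which is the displayed formula. I expect this bookkeeping (together with the earlier choice of operators that makes $\mathcal{D}=\mathcal{U}^{*}$ split as $\mathcal{D}^{A}+\mathcal{D}^{B}$) to be the only delicate point; the rest is formal.

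For $0\leq i\leq a-1$, on $\mathbb{Q}P_i$ we have $\mathcal{D}_{i+1}\mathcal{U}_i=\mathcal{U}_{i-1}\mathcal{D}_i+(\mathcal{D}\mathcal{U}-\mathcal{U}\mathcal{D})\big|_{\mathbb{Q}P_i}$. The first summand is $\mathcal{U}_{i-1}\mathcal{U}_{i-1}^{*}$, hence positive semidefinite, and the second is diagonal with eigenvalues $[a-\dim B]_q-[\dim A]_q>0$, since $\dim A+\dim B=i<a$ and the Gaussian integers are strictly increasing. So $\mathcal{D}_{i+1}\mathcal{U}_i$ is positive definite, hence invertible, hence $\mathcal{U}_i$ is injective. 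Symmetrically, for $a+1\leq i\leq 2a$ the operator $\mathcal{U}_{i-1}\mathcal{D}_i=\mathcal{U}_i^{*}\mathcal{U}_i-(\mathcal{D}\mathcal{U}-\mathcal{U}\mathcal{D})\big|_{\mathbb{Q}P_i}$ is a sum of a positive semidefinite operator and a positive definite diagonal one (now $\dim A+\dim B=i>a$ makes $[\dim A]_q-[a-\dim B]_q>0$), so $\mathcal{D}_i$ is injective. Theorem~\ref{thm:Stanley} with $k=a$ now shows that $(\mathcal{P}_V,\leq)$ is Sperner, and in fact that the subrepresentations of dimension $a$ form a maximum antichain.
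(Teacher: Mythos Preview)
Your proposal is correct and follows essentially the same approach as the paper: both identify subrepresentations with flags $X_1\subseteq X_2\subseteq k^a$, define the same raising and lowering operators, establish the commutation relation $(D_{i+1}U_i-U_{i-1}D_i)(X)=([a-d_2]_q-[d_1]_q)X$, and then invoke Stanley's theorem via the positive-definiteness argument. The only notable difference is organizational: the paper proves the commutator identity by a direct five-case coefficient comparison, whereas you split $\mathcal{U}=\mathcal{U}^A+\mathcal{U}^B$ and $\mathcal{D}=\mathcal{D}^A+\mathcal{D}^B$, observe that the mixed commutators vanish, and reduce each diagonal piece to Stanley's known identity for an interval in a subspace lattice---a slightly cleaner bookkeeping that makes the cancellation of $[\dim B-\dim A]_q$ transparent.
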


  \begin{proof}
  Note that the rank of the poset is equal to $\operatorname{dim}_k(V)=2a$. For brevity we write $\mathcal{P}_{i,V}$ instead of $(\mathcal{P}_{V})_{i}$ for $0\leq i\leq 2a$. A subrepresentation $X\subseteq V$ is given by two vector spaces $0\subseteq X_1\subseteq X_2\subseteq k^a$. We apply Stanley's Theorem \ref{thm:Stanley}. For every integer $i$ we define two maps
  \begin{align*}
    U_i\colon\mathbb{Q}\mathcal{P}_{i,V}&\to\mathcal{P}_{i+1,V}, &D_i\colon\mathbb{Q}\mathcal{P}_{i,V}&\to\mathcal{P}_{i-1,V},\\
    X&\mapsto \sum_{\substack{Y\in\mathcal{P}_{i+1,V}\\ Y\geq X}}Y,&X &\mapsto \sum_{\substack{Z\in\mathcal{P}_{i-1,V}\\ Z\leq X}} Z.
  \end{align*}
  for every $X\in\mathcal{P}_{i,V}$ and extend linearly. By construction $D_i$ is adjoint to $U_{i-1}$ for every $i$. We can generalize Stanley's commutation relation for the up and down operators from type $A_1$ to type $A_2$. More precisely, we claim
  \begin{align}
    \Big(D_{i+1}\circ U_i-U_{i-1}\circ D_i\Big)(X)=\Big([a-d_2]_q-[d_1]_q\Big)X
    \label{eqn:commutator}
  \end{align}
  for every $X\in\mathcal{P}_{i,V}$ with dimension vector $(d_1,d_2)$.

  For a proof of the claim, let $X\in\mathcal{P}_{i,V}$. We compute the coefficients of the expansion of \linebreak$(D_{i+1}\circ U_i-U_{i-1}\circ D_i)(X)$ in the standard basis $\mathcal{P}_{i,V}$ of $\mathbb{Q}\mathcal{P}_{i,V}$. We distinguish the following cases. First, suppose that $X'\in\mathcal{P}_{i,V}$ is a representation such that $X_1'\subseteq X_1$ and $X_2\subseteq X_2'$ are both subspaces of codimension $1$. In this case, $(X_1\subseteq X_2')$ and $(X_1'\subseteq X_2)$ are both subrepresentations of $V$ so that the coefficient corresponding to $X'$ in the expansion of $(D_{i+1}\circ U_i-U_{i-1}\circ D_i)(X)$ is equal to $1-1=0$. Second, suppose that $X'\in\mathcal{P}_{i,V}$ is a representation such that $X_1\subseteq X_1'$ and $X_2'\subseteq X_2$ are both subspaces of codimension $1$. With the same arguments as before the coefficient corresponding to $X'$ in the expansion of $(D_{i+1}\circ U_i-U_{i-1}\circ D_i)(X)$ is equal to $1-1=0$. Third, suppose that $X'\in\mathcal{P}_{i,V}$ is a representation such that $X_1'=X_1$ and $X_2'\neq X_2$. Then the coefficients corresponding to $X'$ in the expansions of $(D_{i+1}\circ U_i)(X)$ and $(U_{i-1}\circ D_i)(X)$ are zero unless $\operatorname{dim}_k(X_2\cap X_2')=d_2-1$ which is equivalent to $\operatorname{dim}_k(X_2+X_2')=d_2+1$. In this case, the coefficient corresponding to $X'$ in the expansion of $(D_{i+1}\circ U_i-U_{i-1}\circ D_i)(X)$ is equal to $1-1=0$. The fourth case where $X'\in\mathcal{P}_{i,V}$ is a representation such that $X_2'=X_2$ and $X_1'\neq X_1$ is treated similarly. Fifth, suppose that $X'=X$. The coefficient corresponding to $X'$ in the expansion of $(D_{i+1}\circ U_i)(X)$ is equal to the number of $(d_1+1)$-dimensional vector spaces $Y_1$ such that $X_1\subseteq Y_1\subseteq X_2$ plus the number of $(d_2+1)$-dimensional vector spaces $Y_2$ such that $X_2\subseteq Y_2\subseteq k^a$, namely $[d_2-d_1]_q+[a-d_2]_q$. Moreover, the coefficient corresponding to $X'$ in the expansion of $(U_{i-1}\circ D_i)(X)$ is equal to the number of $(d_1-1)$-dimensional vector spaces $Z_1$ such that $0\subseteq Z_1\subset X_1$ plus the number of $(d_2-1)$-dimensional vector spaces $Z_2$ such that $X_1\subseteq Z_2\subseteq X_2$, namely $[d_1]_q+[d_2-d_1]_q$. The difference of the two terms is equal to the term $[a-d_2]_q-[d_1]_q$ from equation (\ref{eqn:commutator}). For all other cases $X'$, the coefficient corresponding to $X'\in\mathcal{P}_{i,V}$ in the expansion of $(D_{i+1}\circ U_i-U_{i-1}\circ D_i)(X)$ is equal to $0$.

  Next, we claim that if $0\leq i< a$, then $U_i$ is injective. Dually, if $a<i\leq 2a$, then $D_i$ is injective. We prove the first statement only, the second statement can be proved using the same arguments. The assumption $i<a$ implies $0<a-i=a-d_2-d_1$ for every $X\in\mathcal{P}_{i,V}$. It follows that $a-d_2>d_1$ and $[a-d_2]_q>[d_1]_q$ for every $X\in\mathcal{P}_{i,V}$. By equation (\ref{eqn:commutator}) the matrix of the linear map $D_{i+1}\circ U_i-U_{i-1}\circ D_i$ (with respect to the standard basis) is a diagonal matrix with positive diagonal entries. Especially, it is positive definite. Moreover, the adjointness of $U_{i-1}$ and $D_i$ implies that the matrix corresponding to the composition $U_{i-1}\circ D_i$ (with respect to the standard basis) is positive semi-definite. The sum of the two matrices must be positive definite as well. Especially, $D_{i+1}\circ U_i$ must be invertible so that $U_i$ is injective.
  \end{proof}

  \begin{ex} Figure \ref{fig:SubrepPoset} shows the poset of subrepresentations of the $4$-dimensional representation $P_1^2$ over the finite field with $5$ elements. The subrepresentations of dimension $2$ form a maximum antichain of cardinality $7$.
  \end{ex}

  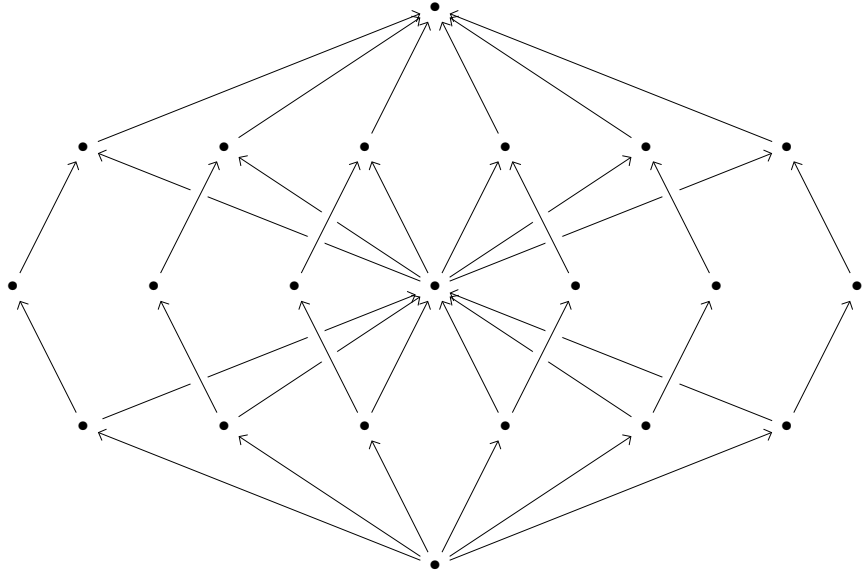
\begin{figure}
      \centering
      \resizebox{.7\linewidth}{!}{
      \begin{tikzpicture}

          \newcommand{\hor}{2cm}
          \newcommand{\ver}{2cm}

          \node (0) at (0,0)                  {$\bullet$};
          \node (1) at (-2.5*\ver,\hor)       {$\bullet$};
          \node (2) at (-1.5*\ver,\hor)       {$\bullet$};
          \node (3) at (-0.5*\ver,\hor)       {$\bullet$};
          \node (4) at (0.5*\ver,\hor)        {$\bullet$};
          \node (5) at (1.5*\ver,\hor)        {$\bullet$};
          \node (6) at (2.5*\ver,\hor)        {$\bullet$};
          \node (7) at (0,2*\hor)             {$\bullet$};
          \node (8) at (-3*\ver,2*\hor)       {$\bullet$};
          \node (9) at (-2*\ver,2*\hor)       {$\bullet$};
          \node (10) at (-1*\ver,2*\hor)      {$\bullet$};
          \node (11) at (1*\ver,2*\hor)       {$\bullet$};
          \node (12) at (2*\ver,2*\hor)       {$\bullet$};
          \node (13) at (3*\ver,2*\hor)       {$\bullet$};
          \node (14) at (-2.5*\ver,3*\hor)    {$\bullet$};
          \node (15) at (-1.5*\ver,3*\hor)    {$\bullet$};
          \node (16) at (-0.5*\ver,3*\hor)    {$\bullet$};
          \node (17) at (0.5*\ver,3*\hor)     {$\bullet$};
          \node (18) at (1.5*\ver,3*\hor)     {$\bullet$};
          \node (19) at (2.5*\ver,3*\hor)     {$\bullet$};
          \node (20) at (0,4*\hor)            {$\bullet$};

          \draw[-angle 90,relative] (0) to (1);
          \draw[-angle 90,relative] (0) to (2);
          \draw[-angle 90,relative] (0) to (3);
          \draw[-angle 90,relative] (0) to (4);
          \draw[-angle 90,relative] (0) to (5);
          \draw[-angle 90,relative] (0) to (6);

          \draw[-angle 90,relative,shorten >=2pt,shorten <=2pt] (1) to (7);
          \draw[fill=white,draw=white] (-3.35,2.65) circle [radius=0.125];
          \draw[fill=white,draw=white] (-1.7,3.35) circle [radius=0.125];
          \draw[-angle 90,relative] (2) to (7);
          \draw[fill=white,draw=white] (-1.5,3) circle [radius=0.125];
          \draw[-angle 90,relative] (3) to (7);
          \draw[-angle 90,relative] (4) to (7);
          \draw[-angle 90,relative] (5) to (7);
          \draw[fill=white,draw=white] (1.5,3) circle [radius=0.125];
          \draw[-angle 90,relative] (6) to (7);
          \draw[fill=white,draw=white] (3.35,2.65) circle [radius=0.125];
          \draw[fill=white,draw=white] (1.7,3.35) circle [radius=0.125];

   		    \draw[-angle 90,relative] (1) to (8);
          \draw[-angle 90,relative] (2) to (9);
          \draw[-angle 90,relative] (3) to (10);
          \draw[-angle 90,relative] (4) to (11);
          \draw[-angle 90,relative] (5) to (12);
          \draw[-angle 90,relative] (6) to (13);

          \draw[-angle 90,relative] (7) to (14);
          \draw[-angle 90,relative] (7) to (15);
          \draw[fill=white,draw=white] (-3.35,5.35) circle [radius=0.125];
          \draw[fill=white,draw=white] (-1.7,4.65) circle [radius=0.125];
          \draw[-angle 90,relative] (7) to (16);
          \draw[fill=white,draw=white] (-1.5,5) circle [radius=0.125];
          \draw[-angle 90,relative] (7) to (17);
          \draw[-angle 90,relative] (7) to (18);
          \draw[fill=white,draw=white] (1.5,5) circle [radius=0.125];
          \draw[-angle 90,relative] (7) to (19);
          \draw[fill=white,draw=white] (3.35,5.35) circle [radius=0.125];
          \draw[fill=white,draw=white] (1.7,4.65) circle [radius=0.125];

          \draw[-angle 90,relative] (8) to (14);
          \draw[-angle 90,relative] (9) to (15);
          \draw[-angle 90,relative] (10) to (16);
          \draw[-angle 90,relative] (11) to (17);
          \draw[-angle 90,relative] (12) to (18);
          \draw[-angle 90,relative] (13) to (19);

          \draw[-angle 90,relative] (14) to (20);
          \draw[-angle 90,relative] (15) to (20);
          \draw[-angle 90,relative] (16) to (20);
          \draw[-angle 90,relative] (17) to (20);
          \draw[-angle 90,relative] (18) to (20);
          \draw[-angle 90,relative] (19) to (20);

      \end{tikzpicture}}

      \caption{Subrepresentation poset in type $A_2$}
      \label{fig:SubrepPoset}
  \end{figure}

  \subsection{A Sperner result for subrepresentation posets over pointed sets}

  A \textit{pointed set} $(A,0_A)$ is a set $A$ together with a distinguished element $0_A\in A$. The set is also known as a \textit{based set} with \textit{basepoint} $0_A$. A \textit{pointed subset} of a pointed set $(A,0_A)$ is a subset $B\subseteq A$ that contains $0_A$ as its basepoint. Let $(A,0_A)$ and $(B,0_B)$ be two pointed sets. A morphism between $(A,0_A)$ and $(B,0_B)$ is a map $f\colon A\to B$ with $f(0_A)=0_B$ such that the restriction $f\vert_{A\setminus f^{-1}(O_B)}\colon A\setminus f^{-1}(O_B) \to B$ is injective. The pointed sets together with the morphisms form the \textit{category of pointed sets}. Some authors also use the term \textit{vector spaces over the field with one element}. Especially, we say that the morphism $f\colon (A,0_A)\to(B,0_B)$ is an \textit{isomorphism} if there exist a morphism $g\colon (B,0_B)\to (A,0_A)$ such that $g\circ f =\id_{A}$ and $f\circ g=\id_B$. Note that in this case $f$ and $g$ are bijections and we say that $(A,0_A)$ and $(B,0_B)$ are \textit{isomorphic} and we write \linebreak$(A,0_A)\cong(B,0_B)$. The \textit{direct sum} $(A,0_A)\oplus(B_,0_B)$ of two pointed sets is the set \[C=(A\setminus\{0_A\})\sqcup(B\setminus\{0_B\})\sqcup\{0_C\}\] with basepoint $0_C$. We have canonical inclusions $\iota_A\colon A\to C$ and $\iota_B\colon B\to C$ and canonical projections \linebreak$\pi_A\colon C\to A$ and $\pi_B\colon C\to B$ such that $\pi_A\circ \iota_A=\id_A$ and $\pi_B\circ\iota_B=\id_B$. Moreover, we define the \textit{dimension} $\operatorname{dim}(A,0_A)=\vert A\vert$ of a pointed set to be the cardinality of the underlying set. Up to isomorphism there is exactly one pointed set of dimension $0$ which we sometimes abbreviate as $0$.

  Let $Q=(Q_0,Q_1)$ be a quiver. A representation of $Q$ with values in the category of pointed sets is a collection $X=((X_i,0_{X_i})_{i\in Q_0},(X_{\alpha})_{\alpha\in Q_1})$ consisting of a pointed set $(X_i,0_{X_i})$ for every vertex $i\in Q_0$ and a morphism of pointed sets $X_{\alpha}\colon (X_i,0_{X_i})\to (X_j,0_{X_j})$ for every arrow $\alpha\colon i\to j$ in $Q_1$. Szczesny~\cite{Sc} studies such representations. Let us recall some basic constructions. First, there is the zero representation with $X_i=0$ for all vertices $i\in Q_0$. A \textit{subrepresentation} $U$ of $X$ is representation of $Q$ such that \linebreak$(U_i,0_{U_i})\subseteq (X_i,0_{X_i})$ is a pointed subset for every vertex $i\in Q_i$ and $U_{\alpha}(u)=V_{\alpha}(u)$ for every arrow $\alpha\colon i\to j$ in $Q_1$ and every element $u\in U_i$. Especially we have $U_{\alpha}(U_i)\subseteq U_j$ for every arrow $\alpha$. A representation is called \textit{simple} if it does not admit a nonzero proper subrepresentation. Suppose that $X,Y$ are two representations of the same quiver $Q$ with values in the category of pointed sets. A \textit{morphism} $\phi\colon X\to Y$ is a collection of morphisms of pointed sets $\phi_i\colon (X_i,0_{X_i})\to (Y_i,0_{Y_i})$ for all vertices $i\in Q_0$ such that $X_{\alpha}\circ\phi_i=\phi_j\circ Y_{\alpha}$ for all arrows $\alpha\colon i\to j$ in $Q_1$. It is called an \textit{isomorphism} if every $X_{\alpha}$ is an isomorphism in the category of pointed sets. In this case we say that $X$ and $Y$ are \textit{isomorphic} and we write $X\cong Y$.

  The \textit{direct sum} $X\oplus Y$ is the representation with $(X\oplus Y)_i=(X_i,0_{X_i})\oplus (Y_i,0_{Y_i})$ for all vertices $i\in Q_0$ and $(X\oplus Y)_{\alpha}(x)=X_{\alpha}(x)$ and $(X\oplus Y)_{\alpha}(y)=Y_{\alpha}(y)$ for all elements $x\in X_i\setminus\{0_{X_i}\}$, $y\in Y_i\setminus\{0_{Y_i}\}$ and all arrows $\alpha\colon i\to j$ in $Q_1$. A representation is called \textit{decomposable} if it is isomorphic to a direct sum $X\oplus Y$ with \linebreak$X,Y\neq 0$. It is called \textit{indecomposable} otherwise. Szczesny \cite[Theorem 5]{Sc} describes indecomposable representations with values in the category of pointed sets for quivers whose underlying undirected diagram is a tree. In this case, the isomorphism classes of indecomposable representations correspond to connected subquivers $Q'$ of $Q$. The indecomposable representation $X'$ corresponding to $Q'$ satisfies $\operatorname{dim}(X_i,0_{X_i})=1$ if $i\in Q'$ and $\operatorname{dim}(X_i,0_{X_i})=0$ otherwise.

  Let  $X$ be a representation of $Q$ with values in the category of pointed sets. We denote by $\mathcal{P}_X$ the finite set of all subrepresentations $U$ of $X$. As before, we define a partial order on the set $\mathcal{P}_X$ by putting $U_1\leq U_2$ if and only if $U_1$ is a subrepresentation of $U_2$. As in Subsection \ref{sub:SpernerLinearSub} the poset is graded when we define a degree map $\operatorname{dim}\colon\mathcal{P}_X\to\mathbb{N}$ by $\operatorname{dim}(X)=\sum_{i\in Q_0}\operatorname{dim}(X_i)$. The following proposition is immediate:

  \begin{prop}
    Let $X$ and $Y$ be two representations of $Q$. Then the poset $(\mathcal{P}_{X\oplus Y},\leq)$ is isomorphic to the direct product $(\mathcal{P}_X,\leq)\times(\mathcal{P}_Y,\leq)$.
  \end{prop}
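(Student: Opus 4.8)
The plan is to construct an explicit order isomorphism $\Phi\colon(\mathcal{P}_{X\oplus Y},\leq)\to(\mathcal{P}_X,\leq)\times(\mathcal{P}_Y,\leq)$ by splitting a subrepresentation of $X\oplus Y$ into its $X$-part and its $Y$-part at every vertex. Recall that for pointed sets the underlying set of $(X_i,0_{X_i})\oplus(Y_i,0_{Y_i})$ is the disjoint union $(X_i\setminus\{0_{X_i}\})\sqcup(Y_i\setminus\{0_{Y_i}\})\sqcup\{0\}$, so every pointed subset $S\subseteq(X\oplus Y)_i$ is uniquely of the form $S=(A\setminus\{0_{X_i}\})\sqcup(B\setminus\{0_{Y_i}\})\sqcup\{0\}$ for a unique pointed subset $A\subseteq X_i$ and a unique pointed subset $B\subseteq Y_i$; write $S^X:=A$ and $S^Y:=B$. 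For $U\in\mathcal{P}_{X\oplus Y}$ I would then set $\Phi(U)=(U^X,U^Y)$ where $(U^X)_i:=(U_i)^X$ and $(U^Y)_i:=(U_i)^Y$.

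First I would verify that $U^X$ and $U^Y$ are in fact subrepresentations of $X$ and of $Y$; this is the only step carrying genuine content. Using the definition of the direct sum recalled just before the proposition, namely $(X\oplus Y)_\alpha(x)=X_\alpha(x)$ for $x\in X_i\setminus\{0_{X_i}\}$ and $(X\oplus Y)_\alpha(y)=Y_\alpha(y)$ for $y\in Y_i\setminus\{0_{Y_i}\}$, one sees that the structure map $(X\oplus Y)_\alpha$ carries the $X$-part of $(X\oplus Y)_i$ into the $X$-part of $(X\oplus Y)_j$ and the $Y$-part into the $Y$-part, for every arrow $\alpha\colon i\to j$. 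Consequently the single condition $(X\oplus Y)_\alpha(U_i)\subseteq U_j$ decouples into the pair of conditions $X_\alpha((U^X)_i)\subseteq(U^X)_j$ and $Y_\alpha((U^Y)_i)\subseteq(U^Y)_j$, so that $U$ is a subrepresentation of $X\oplus Y$ precisely when $U^X\in\mathcal{P}_X$ and $U^Y\in\mathcal{P}_Y$. The same decoupling shows that the assignment $(V,W)\mapsto\Psi(V,W)$, with $(\Psi(V,W))_i:=(V_i\setminus\{0_{X_i}\})\sqcup(W_i\setminus\{0_{Y_i}\})\sqcup\{0\}$, lands in $\mathcal{P}_{X\oplus Y}$, and by uniqueness of the vertexwise splitting $\Phi$ and $\Psi$ are mutually inverse bijections.

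Finally I would check that $\Phi$ is order preserving in both directions: $U\leq U'$ means $U_i\subseteq U_i'$ for every $i$, and since the splitting of pointed subsets at each vertex is compatible with inclusion, this is equivalent to $(U^X)_i\subseteq(U'^X)_i$ and $(U^Y)_i\subseteq(U'^Y)_i$ for every $i$, i.e. to $U^X\leq U'^X$ in $\mathcal{P}_X$ and $U^Y\leq U'^Y$ in $\mathcal{P}_Y$, which is exactly $\Phi(U)\leq\Phi(U')$ for the product order. Hence $\Phi$ is the required poset isomorphism. I do not anticipate any real obstacle: the only point requiring care is the claim that the structure maps of $X\oplus Y$ respect the vertexwise $X/Y$ decomposition, and this is immediate from the definition of the direct sum recalled in the excerpt; everything else is routine bookkeeping about subsets of a disjoint union.
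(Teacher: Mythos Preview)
Your proposal is correct and carries out exactly the straightforward vertexwise splitting that the paper has in mind; indeed the paper does not write out a proof at all, merely labeling the proposition ``immediate.'' Your argument makes explicit the one point that genuinely needs checking---that the structure maps of $X\oplus Y$ respect the $X/Y$ decomposition at each vertex---and handles it correctly from the definition of the direct sum.
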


  \begin{prop}
    Let $r\geq 1$ and $\ell_1,\ell_2,\ldots,\ell_r\geq 1$ be integers. We consider the quiver $Q$ with underlying undirected star-shaped graph as in Subsection~\ref{subsec:quivrep}, with a unique source $c$ and $r$ rays \[c\to v_{1,i}\to v_{2,i}\to \ldots\to v_{i,\ell_{i}}\] for $1\leq i\leq r$. Let $X$ be an indecomposable representation of $Q$ with values in the category of pointed sets. Then $(\mathcal{P}_X,\leq)$ is Sperner.
  \end{prop}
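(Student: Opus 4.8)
The plan is to identify the poset $(\mathcal{P}_X,\leq)$ explicitly — it will turn out to be a chain product, possibly with one extra element adjoined on top — and then invoke the Sperner property of chain products from Example~\ref{ex:chainprod}. First I would unwind Szczesny's classification. The indecomposable $X$ is attached to a connected subquiver $Q'\subseteq Q$, with $\operatorname{dim}(X_i,0_{X_i})=1$ for $i\in Q'$ and $\operatorname{dim}(X_i,0_{X_i})=0$ otherwise; moreover the structure maps $X_\alpha$ along the arrows of $Q'$ must be isomorphisms of two-element pointed sets, since if one of them were zero then $X$ would split off a simple summand, while all remaining $X_\alpha$ are zero for dimension reasons. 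A pointed subset of a two-element pointed set is either $\{0\}$ or the whole set, so a subrepresentation $U\subseteq X$ is determined by its support $S=\operatorname{supp}(U)\subseteq Q'$, and the compatibility $X_\alpha(U_i)\subseteq U_j$ then says precisely that $S$ is \emph{successor-closed}: if $i\in S$ and $\alpha\colon i\to j$ is an arrow of $Q'$, then $j\in S$. Conversely every such $S$ defines a subrepresentation, so $(\mathcal{P}_X,\leq)$ is isomorphic to the poset of successor-closed subsets of $Q'$ ordered by inclusion and graded by cardinality.

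Next I would list the connected subquivers of the star. If $Q'$ avoids the central source $c$, then $Q'$ lies inside a single ray and is a directed path $w_1\to w_2\to\cdots\to w_p$; its successor-closed subsets are the terminal segments $\{w_j,\ldots,w_p\}$, so $\mathcal{P}_X$ is the chain $Ch(p)$, which is Sperner by Example~\ref{ex:chainprod}. If $Q'$ contains $c$, then connectedness forces $Q'=\{c\}\cup\bigcup_{i=1}^{r}\{v_{i,1},\ldots,v_{i,m_i}\}$ for some $0\leq m_i\leq\ell_i$. A successor-closed $S$ containing $c$ must contain every $v_{i,1}$, and since its intersection with each ray is a terminal segment this forces $S=Q'$; a successor-closed $S$ not containing $c$ is exactly an independent choice of terminal segment on each ray. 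Writing $R$ for the chain product $Ch(m_1,m_2,\ldots,m_r)$ of these choices (factors with $m_i=0$ are trivial and can be dropped), we see that $\mathcal{P}_X$ is obtained from $R$ by adjoining one new element $\widehat 1=X$ above the greatest element of $R$; this $\widehat 1$ is the global maximum of $\mathcal{P}_X$, of degree $\rk(R)+1$.

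Finally I would conclude. By Example~\ref{ex:chainprod} the chain product $R$ is a symmetric chain order, hence Sperner, and its middle level $R_k$ with $k=\lfloor\rk(R)/2\rfloor$ is a maximum antichain of $R$. Any antichain of $\mathcal{P}_X$ that contains $\widehat 1$ equals the singleton $\{\widehat 1\}$, since $\widehat 1$ is comparable to every element, and its size $1$ does not exceed $|R_k|$; any antichain avoiding $\widehat 1$ is an antichain of $R$ and so has size at most $|R_k|$. Hence $R_k$ is a maximum antichain of $\mathcal{P}_X$, and since the dimension grading of $\mathcal{P}_X$ restricts to that of $R$, the set $R_k$ is precisely the level $(\mathcal{P}_X)_k$; therefore $(\mathcal{P}_X,\leq)$ is Sperner. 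The step that needs genuine care is the identification of the subrepresentations of $X$ with the successor-closed subsets of $Q'$ — which hinges on the structure maps of the indecomposable $X$ being isomorphisms along $Q'$ — together with the bookkeeping of the connected subquivers of a star; once these are settled, the Sperner conclusion is immediate from the already established case of chain products.
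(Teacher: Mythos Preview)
Your proof is correct and follows essentially the same route as the paper's: both reduce to the observation that $\mathcal{P}_X\setminus\{X\}$ is a chain product and then invoke Example~\ref{ex:chainprod}. The paper is terser --- it disposes of the case analysis by declaring ``without loss of generality $\operatorname{supp}(X)=Q_0$'' and then asserts the isomorphism $\mathcal{P}_X\setminus\{X\}\cong Ch(\ell_1,\ldots,\ell_r)$ directly --- whereas you spell out the identification of subrepresentations with successor-closed subsets and treat the cases $c\notin Q'$ and $c\in Q'$ separately; but the underlying idea is identical.
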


  \begin{proof}
    Without loss of generality we may assume $\operatorname{supp}(X)=Q_0$. The case $r=1$ is easy so let us assume $r\geq 2$. A maximum antichain can not contain the representation $X$ itself. But $(\mathcal{P}_X\setminus\{X\},\leq)$ is isomorphic to the chain product $Ch(\ell_1,\ell_2,\ldots,\ell_r)$ from Example \ref{ex:chainprod}.
  \end{proof}

  A similar statement is true for the star-shaped quiver whose vertices are oriented towards the inner vertex.

\end{document}